\newcommand{\Ns}{N_{\rm{state}}}
\newcommand{\Nt}{N_t}
\newcommand{\tx}{ \bf \tilde x}
\newcommand{\bx}{\mathbf x}
\renewcommand{\tx}{\mathbf{\tilde{x}}}
\newcommand{\x}{\mathbf{x}}
\newcommand{\F}{\mathbf{F}}
\newtheorem{theorem}{Theorem}
\newtheorem{proof}{Proof}
\providecommand{\keywords}[1]{\textbf{\textit{Keywords---}} #1}
\newcommand{\M}{\mathcal{M}}
\newtheorem{remark}{Remark}
\title{A Goal-Oriented Adaptive Discrete Empirical Interpolation Method}
\author{
 R. \c Stef\u anescu\\
  \texttt{GVM Model Department, Spire Global, Boulder, CO 80302, USA, razstefanescu@gmail.com}
  \and
  A. Sandu\\
  \texttt{Computational Science Laboratory, Department of Computer Science, Virginia Polytechnic Institute and State University, Blacksburg, Virginia 24060, USA, sandu@cs.vt.edu}
}
\begin{document}
\maketitle

\begin{abstract}
In this study we propose a-posteriori error estimation results to approximate the precision loss in quantities of interests computed using reduced order models. To generate the surrogate models we employ Proper Orthogonal Decomposition and Discrete Empirical Interpolation Method. First order expansions of the components of the quantity of interest obtained as the product between the components gradient and model residuals are summed up to generate the error estimation result. Efficient versions are derived for explicit and implicit Euler schemes and require only one reduced forward and adjoint models and high-fidelity model residuals estimation. Then we derive an adaptive DEIM algorithm to enhance the accuracy of these quantities of interests. The adaptive DEIM algorithm uses dual weighted residuals singular vectors in combination with the non-linear term basis. Both the a-posteriori error estimation results and the adaptive DEIM algorithm were assessed using the 1D-Burgers and Shallow Water Equation models and the numerical experiments shows very good agreement with the theoretical results.
\end{abstract}

\keywords{proper orthogonal decomposition; discrete empirical interpolation method; a-posteriori error estimates; shallow water equations}
%

\pagestyle{myheadings}
\thispagestyle{plain}
\markboth{A-posteriori Error Estimates for Adaptivity of Reduced Order Models}{R. \c Stef\u anescu and A. Sandu}
\section{Introduction}\label{sec:Intro}

The major issue in large scale complex modelling is that of reducing the computational cost while preserving numerical accuracy. Among the model reduction
techniques, reduced basis \cite{BMN2004,grepl2005posteriori,patera2007reduced,rozza2008reduced,Dihlmann_2013} and Proper Orthogonal Decomposition (POD) \cite{karhunen1946zss,loeve1955pt,hotelling1939acs,lorenz1956eof}
provide an
efficient means of deriving the reduced basis for high-dimensional non-linear flow systems. Data analysis is conducted to extract basis functions, from experimental data or
detailed simulations of high-dimensional systems, for subsequent use in Galerkin or Petrov-Galerkin projections that yield low dimensional dynamical models.

However due to the model non-linearities the computational complexity of the reduced order models still depends on the number
of variables of the high-fidelity model. The current literature presents several ways to avoid this issue such as the empirical interpolation method
(EIM) \cite{BMN2004} and its discrete variant DEIM \cite{Cha2008,ChaSor2012,ChaSor2010}, best points interpolation method \citep{NPP2008}. Missing point estimation
\citep{Astrid_2008} and Gauss-Newton with approximated tensors
\citep{Carlberg2_2011,Carlberg_2012} methods are relying upon the gappy POD technique \citep{Everson_1995} and were developed for
the same reason.  A comparative study between missing point estimation method, gappy POD and DEIM is available in \cite{dimitriu2017comparative}.

A-posteriori error estimates of DEIM reduced non-linear dynamical system based on logarithmic Lipschitz constants are available in the literature \cite{wirtz2014posteriori}
. Additional state space error estimates \cite{ChaSor2012} are shown to be proportional to the sums of the singular values corresponding to the neglected POD basis vectors both in Galerkin projection
of the reduced system and in the DEIM approximation of the non-linear term.

The parameter reduced order modeling poses even more difficulties since one may require to construct a global basis to enable both accurate and fast reduced order
models along the entire parametric domain. Recently adaptive techniques capable to enhance the accuracy of the reduced order models or quantities of interest
depending on such surrogate models have been proposed. For example, by assigning
larger weights to samples that are more important or have a higher probability of occurring allow the construction of smaller global bases
for both state and non-linear terms \cite{chen2013weighted,chen2014weighted}. In the data assimilation field, the sensitivities of a cost functional
with respect to the time-varying model state is used to define appropriate weights and to implement a dual weighted POD method for order reduction \cite{daescu2008dual}. Effective exploration of the parameter space by adaptive grids based
on a-posteriori error estimates \cite{haasdonk2008adaptive} brought improvement over the fixed and uniformly refined grid approaches. Dual techniques \cite{veroy2005certified} for a-posteriori error estimates enabled the construction of a goal-oriented global parametric basis.
The idea of a spanning ROM \cite{weickum2006multi} suggests  interpolating the associated high-fidelity solutions for a new parameter configuration
and then extracting the singular vectors.

To generate reduced order models being simultaneously accurate and performant local strategies have been designed. The parametric or time domains
are partitioned into several sub-regions and local bases are constructed for both the state variables and non-linear terms. Local reduced operators associated with the EIM \cite{eftang2012parameter} or DEIM \cite{peherstorfer2014localized} approximations and POD \cite{Rapun_2010} or
reduced basis models \cite{dihlmann2011model} are computed off-line and are properly selected during the on-line phase. Dictionary methods \cite{kaulmann2013online,maday2013locally}
instead of building the reduced basis during the off-line phase construct a small parameter adapted basis using a Greedy procedure during the on-line stage. A different approach solves a
residual minimization problem to obtain a low dimensional approximation of an incremental solution on-line \cite{amsallem2012nonlinear} without
the need of computing reduced operators during the off-line stage.

Another form of adaptivity involves interpolation of already existing bases in the matrix space \cite{lieu2004parameter}, principal vector space
\cite{lieu2004parameter} and in the space tangent to the Grassmann manifold \cite{Amsallem_2008} to generate a reduced order
basis for a new parameter configuration. However these techniques still require computation of reduced operators thus they can be classified as off-line
approaches. To mitigate this inefficiency directly interpolating the system matrices of the local reduced models \cite{panzer2010parametric} has been proposed
in the space tangent to the Grassmann \cite{amsallem2011online,Zimmermann2014}  and Riemannian \cite{degroote2010interpolation} manifolds.

Several approaches that incorporate new data online and rebuild the reduced system are available in the reduced order optimization field \cite{EArian_MFahl_EWSachs_2000,
weickum2009multi, cstefuanescu2015pod, zahr2015progressive} and parametric reduced order modeling \cite{paul2015adaptive}.

In this paper we introduce a-posteriori error estimation results to approximate
the errors in quantities of interests resulted from using reduced
POD/DEIM reduced order models. First order expansions of the components of the quantity of interest obtained as the
product between the components gradient and model residuals are summed
up to generate the error estimation result. Efficient versions are derived for explicit and implicit Euler schemes and require only one reduced forward and adjoint models and high-fidelity model residuals estimation.

Later, we propose a novel strategy that incorporates on-line dual weighted residuals associated with some quantity of interest and modifies the location of the DEIM
points corresponding to the DEIM non-linear reduced order terms approximations. In this way we enhance the accuracy of the quantity of interest computed via
reduced order models. We show that our methodology works also for different parameter configurations as seen in the numerical experiments section using 1D-Burgers
and Shallow Water Equations models. The key ingredients are: (1) an a-posteriori error estimation result based on necessary optimality conditions of a constrained
optimization problem; (2) an adaptive greedy algorithm that makes use of both non-linear terms and dual weighted residuals bases, respectively.

In contrast to the work proposed in \cite{Peherstorfer_Willcox_2015} where both non-linear reduced basis and DEIM interpolation points are adapted with additive low-rank updates, we are only adapting the location of the interpolation points by taking
into account the dual weighted residuals. Our approach follows the research developed in \cite{meyer2003efficient} where dual weighted method is used to adaptively resize the number of basis vectors and the length of the time step to satisfy a given error tolerance.

This paper is organized as follows. Section \ref{sec:ROM} reviews the Proper Orthogonal Decomposition and Discrete Empirical Interpolation methods and Section \ref{sec:problem_formulation} formulates the problem of constructing reduced order models with adaptive DEIM interpolation points. In Section \ref{sec:aposteriori} we introduce the a-posteriori error estimation approach for the error in a quantity of interest. Fast computational strategies based on a reduced order adjoint model are derived and enable the use of the a-posteriori error estimations for different parametric configurations. Section \ref{sec:DEIM_adapt} presents a greedy adaptive algorithm that uses the dual weighted residuals to update the location of DEIM interpolation points. Section \ref{sec:numerical} presents numerical results with the 1D-Burgers and Shallow Water Equations models. In Section 8 we conclude the paper with some remarks.

\section{Reduced order modeling}\label{sec:ROM}

Reduced order modeling represents the dynamics of large-scale systems using only a smaller number of variables and reduced order basis functions. We consider here the construction of reduced order models via Proper Orthogonal Decomposition (POD)
and Galerkin projection. To mitigate a known deficiency of POD Galerkin, we make use of Discrete Empirical Interpolation Method to approximate the non-linear terms.

\subsection{Proper Orthogonal Decomposition}\label{sec:POD}

Proper Orthogonal Decomposition (Karhunen-Lo\`{e}ve decomposition) has been proposed in \cite{karhunen1946zss,loeve1955pt,Kosambi1943,Obukhov1954} for an infinite dimensional framework.  Latter on, the finite dimensional case leads to the development of principal component analysis \cite{jolliffe2002principal} in the statistical literature, inspired from the early work of \citet{pearson1901} and \citet{hotelling1939acs}. The approach is named empirical orthogonal function decomposition in oceanography and meteorology \cite{lorenz1956eof}, and factor analysis in psychology and economics \cite{gorsuch1990common}.

POD can be thought of as a Galerkin approximation in the spatial variable built from functions corresponding to the solution of the physical system at specified time instances, as obtained by the method of snapshots \cite{Sir87a,Sir87b,Sir87c}. POD has been used successfully in numerous applications such as unsteady viscous fl‚ows \cite{epureanu2001reduced}, compressible flows \citep{Rowley2004}, computational fluid dynamics \citep{Kunisch_Volkwein_POD2002,Rowley2005,Willcox02balancedmodel,Noack2010}, turbulent flows \cite{San_Iliescu2013,wells2015regularized}, and aerodynamics  \citep{Bui-thanh04aerodynamicdata}.

We consider a general discrete dynamical system
\begin{equation}
\label{eqn:full_forward_model_init}
 {\mathbf x}_{i+1} =  \M_{i,i+1}\left({\mathbf x}_i,\mu\right), \quad i=0,..,\Nt-1, \quad \mu \in \mathcal{\tilde P},
\end{equation}
where $\x_i \in \mathbb{R}^{\Ns}$, $i=0,\ldots,\Nt$, denotes the state at time $t_i$,  with $\Ns$ being the total number of discrete model variables per time step, and $N_t \in \mathbb{N},~N_t>0$ being the number of time steps. Here $\mu$ is a vector of parameters that characterizes the physical properties of the flow,  e.g., the Coriolis force in a Shallow Water Equations model.

For a given parameter configuration we select an ensemble of $N_t$ time instances (snapshots) ${\bf x}_{t_1}^{\mu},...,{\bf x}_{t_{N_t}}^{\mu}  \in \mathbb{R}^{\Ns}$ from the solution of \eqref{eqn:full_forward_model_init}. The POD method chooses an orthonormal basis $U_{\mu}=[{\bf u}_{i}^{\mu}]_{i=1,..,k} \in \mathbb{R}^{{\Ns}\times k}$ such that the mean square error between ${\bf x}(\mu,t_i)$ and the projection ${\bf x}_\textsc{pod}^{\mu}(t_i) = U_{\mu}\,{\bf \tilde x}(\mu,t_i),$ ${\bf \tilde x}(\mu,t_i) \in \mathbb{R}^k,~i=1,..,N_t,$ is minimized on average.  The POD space dimension $k \ll {\Ns}$ is appropriately chosen to capture the dynamics of the flow as described by Algorithm \ref{euclid}.

\begin{algorithm}
 \begin{algorithmic}[1]
 \State Compute the singular value decomposition for the snapshots matrix $[{\bf x}_{t_1}^{\mu}~~ ... ~~{\bf x}_{t_{N_t}}^{\mu}] = \bar U_{\mu} \Sigma_{\mu} {\bar V}^T_{\mu},$
 with the singular vectors matrix $\bar U_{\mu} =[{\bf u}_i^{\mu}]_{i=1,..,\Ns}.$
 \State Using the singular-values $\lambda_1\geq \lambda_2\geq ...\lambda_n\geq 0$ stored in the diagonal matrix $\Sigma_{\mu}$, define   $I(m)=\left(\sum_{i=1}^m \lambda_i\right) / \left(\sum_{i=1}^{\Ns} \lambda_i\right)$.
\State Choose the dimension of the POD basis as $ k=\arg\min_m \{I(m):I(m)\geq \gamma\}$, where $0 \leq \gamma \leq 1$ is the percentage of total information captured by the reduced space $U_{\mu} = \textrm{span}\{{\bf u}_1^{\mu},{\bf u}_2^{\mu},...,{\bf u}_k^{\mu}\}.$ Usually $\gamma=0.99$.
 \end{algorithmic}
 \caption{POD basis construction}
 \label{euclid}
\end{algorithm}
Another way to compute the POD basis is to make use of the eigenvalue decomposition applied to the correlation matrix \cite[Alg. 1]{stefanescu2014comparison}.  However the SVD-based POD basis construction is more computationally efficient.

The Galerkin approach projects the full order state $\mathbf{x}$ and the full order model equations \eqref{eqn:full_forward_model_init} onto the space spanned by the POD basis elements to obtain the following reduced model:
\begin{equation}
\label{eqn:reduced_forward_model_init}
 \tx_{i+1} = U_{\mu}^T \cdot \M_{i,i+1}\left(U_{\mu}\,\tx_{i},\mu\right), \quad i=0,..,\Nt-1, \quad \mu \in \mathcal{\tilde P}.
 \end{equation}
%

The efficiency of the POD-Galerkin techniques is limited to linear or bilinear terms, since the projected
non-linear terms at every discrete time step still depend on their evaluation in the full model space, e.g.,
\begin{equation}
\tilde N({\bf \tilde x}) \approx \underbrace{U_{\mu}^T}_{~k\times \Ns} \cdot \underbrace{\F(U_{\mu}\tx_i,\mu)}_{\Ns \times 1},
\label{eqn::POD_nonlinearity}
\end{equation}
where $\F : \mathbb{R}^{\Ns} \to \mathbb{R}^{\Ns}$ is the non-linear component of the model \eqref{eqn:full_forward_model_init}. In case of polynomial non-linearities, the tensorial POD technique \cite{stefanescu2014comparison} can be employed to remove the dependence on the dimension of the full order system by manipulating the order of computing.

To mitigate this inefficiency we make use of the  Discrete Empirical Interpolation Method  \cite{ChaSor2010,Stefanescu2013} that can handle efficiently any type of non-linearity and provides a  natural framework for adaptive reduced order modeling.

\subsection{Discrete Empirical Interpolation Method}\label{sec:DEIM}
The empirical interpolation method \cite{BMN2004,MBarrault_YMaday_NDNguyen_ATPatera_2004a}  and its discrete version DEIM \cite{ChaSor2010} were designed to estimate non-linear terms allowing  for an effectively affine offline-online computational decomposition in the context of reduced order models. Both interpolation methods provide an efficient way to approximate non-linear functions in the continuous and discrete frameworks. They were successfully used in the POD framework with finite difference,
finite element, and  finite volume discretization methods. A description of EIM in connection with the  reduced basis framework and a-posteriori error bounds can be found in \citep{Mad_Nguy_Pat_Pau,Grep_Mad_Nguy_Pat}.

The DEIM implementation is based on a POD approach combined with a greedy algorithm. For $m\ll \Ns$, the POD/DEIM approximation of \eqref{eqn::POD_nonlinearity}
is
\begin{equation}
\tilde N({\bf \tilde x}) \approx \underbrace{U_{\mu}^TV_{\mu}(P^TV_{\mu})^{-1}}_{{\rm precomputed}~k\times m}
\cdot  \underbrace{P^T\F\left(U_{\mu}{\bf \tilde x},\mu\right)}_{m  \times 1}
\label{eqn::POD_DEIM_nonlinearity}
\end{equation}
where $V_{\mu}\in \mathbb{R}^{\Ns \times m}$ collects the first $m$ POD basis modes of non-linear function ${\bf F}$, while $P \in \mathbb{R}^{\Ns \times m}$ is the DEIM interpolation selection matrix. The core of the DEIM procedure is an iterative greedy procedure given in Algorithm \ref{alg::DEIM} that inductively constructs $P$ from the linearly independent set $V_{\mu}$ \citep[section 3]{ChaSor2010}. Specifically, Algorithm \ref{alg::DEIM} determines the indices $\rho_1,\rho_2,...,\rho_m$, and constructs $P=[e_{\rho_1},..,e_{\rho_m}]\in \mathbb{R}^{n\times m}$, where $e_{\rho_i}\in\mathbb{R}^n$ is the $\rho_i$-th column of the identity matrix.

\begin{algorithm}
{\bf INPUT}: $\{{\bf v}_\ell\}_{\ell=1}^m\subset\mathbb{R}^{\Ns}$ (linearly independent): \newline
{\bf OUTPUT}: $\bm{\rho}=[\rho_1,..,\rho_m]\in\mathbb{N}^m$
 \begin{algorithmic}[1]
 \State $\{\psi,\rho_1\}=\max|{\bf v}_1|; {\psi}\in \mathbb{R}$ is the largest absolute value entry of $v_1$, and $\rho_1$ is its position, with the smallest index taken in case of a tie.
 \State $V_{\mu}:=[{\bf v}_1]\in \mathbb{R}^{\Ns},~P:=[e_{\rho_1}]\in \mathbb{R}^{\Ns},~\bm{\rho}:=[\rho_1]\in \mathbb{N}.$
 \For{$\ell=2,..,m$}
      \State Solve $(P^TV_{\mu})\, {\bf c}=P^T{\bf v_l} \textrm{  for } {\bf c}\in \mathbb{R}^{\ell-1};\quad V_{\mu},P\in\mathbb{R}^{{\Ns}\times(\ell-1)}.$
      \State ${\bf r}:={\bf v}_\ell-V_{\mu}{\bf c},~{\bf r}\in \mathbb{R}^{\Ns}.$
      \State $\{\psi,\rho_1\}=\max\{|{\bf r}|\}.$
      \State $V_{\mu} := [V_{\mu}~~{\bf v}_\ell],~~P := [P~~e_{\rho_l}],~~\bm{\rho} := \left[
           \bm{\rho}^T~~\rho_\ell\right]^T.$
   \EndFor
 \end{algorithmic}
 \caption{Standard DEIM algorithm for obtaining interpolation indices}
 \label{alg::DEIM}
\end{algorithm}

The algorithm first searches for the largest absolute value entry of the first POD basis $|{\bf v}_1|$, and the corresponding index represents the first DEIM interpolation index $\rho_1\in\{1,2,..,n\}$. The remaining interpolation indices $\rho_\ell,~\ell=2,3..,m$, are selected so that each of them corresponds to the entry with the largest absolute value in $|{\bf r}|$. The vector ${\bf r}$ can be viewed as the residual or the error between the next basis vector ${\bf v}_\ell,~l=2,3..,m$ and its approximation $V_{\mu}{\bf c}$ from interpolating the previous basis vectors $\{{\bf v}_1,{\bf v}_2,..,{\bf v}_{\ell-1}\}$ at the previously determined indices ${\rho_1},{\rho_2},..,{\rho_{\ell-1}}$. The linear independence of the vectors $\{{\bf v}_\ell\}_{\ell=1}^m$ guarantees that ${\bf r}$ is a nonzero vector at each iteration, and that the output indices $\{\rho_\ell\}_{\ell=1}^m$ are not repeating.

The spectrum analysis of the non-linear snapshots correlation matrix offers guidance to choosing the number of DEIM interpolation points. We are particularly interested in the eigenvalues rate of descent. Lemma $(3.2)$ in \citep[section 3.2]{ChaSor2010}
provides an error bound for the DEIM approximation \eqref{eqn::POD_DEIM_nonlinearity} that can be estimated by the largest POD eigenvalue of the snapshots correlation matrix not taken into account by POD basis $V_{\mu}$.

Recent developments of DEIM include rigorous state space error bounds \citep{ChaSor2012}, a-posteriori error estimation \citep{Wirtz2012}, and applications to 1D FitzHugh-Nagumo model \citep{ChaSor2010}, 1D simulating neurons model \citep{ChaSor2_2010}, 1D non-linear thermal model \citep{HBW2011}, 1D Burgers equation \citep{Aanonsen2009,Cha2008}, 2D non-linear miscible viscous fingering in porous medium \citep{ChaSor2011}, oil reservoirs models \citep{EkaSuwartadi2012}, and 2D Swallow Water Equations model (SWE) \citep{Stefanescu2013}. We emphasize that only few POD/DEIM studies with finite elements or finite volume methods have been performed, e.g., for electrical networks \citep{HK2012} and for a 2D ignition and detonation problem \citep{Nguyen_Van_Bo}. Flow  past a cylinder simulations using a hybrid reduced approach combining the quadratic expansion method and DEIM are performed in \citep{Xiao2014}.

\section{Problem formulation}\label{sec:problem_formulation}

We are interested  in a particular aspect of  the solution of \eqref{eqn:full_forward_model_init} defined
by the smooth scalar function $\mathcal{Q}(\x,\mu)$ with
\begin{equation}
\label{eqn:hfqoi}
 \mathcal{Q}: \mathbb{R}^{\Ns \times \mathcal{\tilde P}} \to \mathbb{R}, \mathcal{Q}\left(\x_0,\mu\right) =\sum_{i=0}^{\Nt}~r_i\left(\x_i,\mu\right),
\end{equation}
with $r_i:\mathbb{R}^{\Ns \times \mathcal{\tilde P}} \to \mathbb{R},~i=0,\ldots,\Nt$.
We call \eqref{eqn:hfqoi} the quantity of interest (QoI).

A POD/DEIM reduced order model described in \eqref{eqn:reduced_forward_model_init} and \eqref{eqn::POD_DEIM_nonlinearity} is capable to decrease the computational complexity of evaluating $\mathcal{Q}$ in \eqref{eqn:hfqoi} by orders of magnitudes. However the reduced order approximation leads to an error in the computed QoI denoted by
\begin{equation}
\label{eqn::error_qoi}
 \varepsilon(\mu) = \mathcal{Q}\left(\x,\mu \right) - \mathcal{Q}\left(\widehat{\bx},\mu\right),
\end{equation}
where $\widehat{\bx} = \{\widehat{\bx}_0,~\widehat{\bx}_1,..,\widehat{\bx}_{\Nt}\}$ is the reduced order solution projected onto the full space, i.e. $\widehat{\bx}_i = U_{\mu}\,\tx_i$, $i=0,..,\Nt$.

We seek to construct adaptive reduced order models that allow to accurately estimate the QoI \eqref{eqn:hfqoi}. Specifically, we propose a procedure to adaptively modify the locations of DEIM interpolation points associated with the model non-linear term \eqref{eqn::POD_DEIM_nonlinearity} such as to decrease the error in the QoI \eqref{eqn::error_qoi}. Our approach uses of an efficient a-posteriori error estimate based on the solution of a reduced order adjoint model, and the residual of the high-fidelity model computed with the projected reduced order solution. The dual weighted residuals (the Hadamard products of the projected reduced order adjoint solution and the model residuals) are used to design the new locations for the DEIM points to increase the accuracy of $\mathcal{Q}\left(\widehat{\bx},\mu\right)$.

\section{A-posteriori error estimates}\label{sec:aposteriori}

\subsection{Gradient of the quantity of interest}\label{sec:Lagrangian}

Our a-posteriori error estimate framework requires the first order necessary optimality conditions of the following constrained optimization  problem

\begin{subequations}
\label{eqn::Qoi_optimization}
\begin{equation}
\label{eqn::Qoi_optim}
 \min_{\x_0}~~\mathcal{Q}\left(\x_0,\mu\right) = \sum_{i=0}^{\Nt}~r_i\left(\x_i,\mu\right),
\end{equation}
subject to the constraints posed by the non-linear forward model dynamics \eqref{eqn:full_forward_model_init}
\begin{equation}
\label{eqn:full_forward_model}
 {\mathbf x}_{i+1} = \M_{i,i+1}\left({\mathbf x}_i,\mu\right), \quad i=0,..,\Nt-1.
\end{equation}
\end{subequations}

\begin{theorem}[Minimization of the QoI]
\label{th::first_order_nc}
Assume that the model operators $\M_{i,i+1} : \mathbb{R}^{\Ns \times \mathcal{\tilde P}} \to \mathbb{R}^{\Ns},~i=0,..,\Nt-1,$ are of class $C^1$, and the scalar functions $r_i:\mathbb{R}^{\Ns \times \mathcal{\tilde P}} \to \mathbb{R},~i=0,..,\Nt,$ belong to the class $C^1$ in the first variable. Then the first order necessary optimality conditions of the problem \eqref{eqn::Qoi_optimization} are given by:
\begin{subequations}
\label{eqn:KKT_Full_forward_model}
\begin{equation}
\label{eqn:Full_forward_model}
\textnormal{{Forward model}:  } {\mathbf x}_{i+1} = \M_{i,i+1}\left({\mathbf x}_i,\mu\right), \quad i=0,..,N_t-1,
\end{equation}
\begin{equation}\label{eqn:Full_adjoint_model}
\begin{split}
\quad \quad \textnormal{{Adjoint model}:  } & \mathbf{\lambda}_N = - \left( \frac{\partial r_{\Nt}}{\partial \bx_{\Nt}}\right)^T(\bx_{\Nt},\mu), \\
& {\mathbf \lambda}_i = {\mathbf M}_{i+1,i}^*\mathbf{\lambda}_{i+1} -  \left( \frac{\partial r_{i}}{\partial \bx_{i}}\right)^T(\bx_{i},\mu),\quad i=\Nt-1,..,0,
\end{split}
\end{equation}
\begin{equation}\label{eqn:Full_function_gradient}
\textnormal{{Reduced gradient:  }} \nabla_{{\bf x}_0^*}\mathcal{L} = - {\mathbf \lambda}_0 = 0,
\end{equation}
\end{subequations}
where ${\mathbf M}_{i+1,i}^*$ is the Jacobian matrix of $\M_{i,i+1}$.
\end{theorem}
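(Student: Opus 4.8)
The plan is to set up the Lagrangian for the equality-constrained problem \eqref{eqn::Qoi_optimization} and read off the KKT system by differentiating with respect to each block of variables. Concretely, I would introduce Lagrange multipliers (adjoint variables) $\lambda_{i+1} \in \mathbb{R}^{\Ns}$ for each of the $\Nt$ dynamic constraints \eqref{eqn:full_forward_model}, $i=0,\dots,\Nt-1$, and form
\begin{equation*}
\mathcal{L}(\x_0,\dots,\x_{\Nt},\lambda_1,\dots,\lambda_{\Nt},\mu) = \sum_{i=0}^{\Nt} r_i(\x_i,\mu) + \sum_{i=0}^{\Nt-1} \lambda_{i+1}^T\bigl(\M_{i,i+1}(\x_i,\mu) - \x_{i+1}\bigr).
\end{equation*}
The $C^1$ hypotheses on $\M_{i,i+1}$ and on $r_i$ in the first variable are exactly what is needed for $\mathcal{L}$ to be continuously differentiable in the state and multiplier variables, so that stationarity is a valid first-order necessary condition.

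Next I would take the three families of partial derivatives. Setting $\nabla_{\lambda_{i+1}}\mathcal{L}=0$ recovers the forward model \eqref{eqn:Full_forward_model}. Setting $\nabla_{\x_i}\mathcal{L}=0$ for the interior and terminal indices gives the adjoint recursion: at $i=\Nt$ only $r_{\Nt}$ and the $-\lambda_{\Nt}^T\x_{\Nt}$ term involve $\x_{\Nt}$, yielding $\lambda_{\Nt} = -(\partial r_{\Nt}/\partial\x_{\Nt})^T$; for $0<i<\Nt$, the variable $\x_i$ appears in $r_i$, in $\lambda_{i+1}^T\M_{i,i+1}(\x_i,\mu)$, and in $-\lambda_i^T\x_i$, giving $\lambda_i = \M_{i+1,i}^*\lambda_{i+1} - (\partial r_i/\partial\x_i)^T$ where $\M_{i+1,i}^* = (\partial\M_{i,i+1}/\partial\x_i)^T$ is the transposed Jacobian (the adjoint/TLM operator). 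Finally, $\nabla_{\x_0}\mathcal{L}=0$ involves $r_0$ and $\lambda_1^T\M_{0,1}(\x_0,\mu)$ but \emph{not} a $-\lambda_0^T\x_0$ term since there is no constraint producing $\x_0$; extending the adjoint recursion formally to $i=0$ defines $\lambda_0 := \M_{1,0}^*\lambda_1 - (\partial r_0/\partial\x_0)^T$, and then the optimality condition is precisely $\nabla_{\x_0}\mathcal{L} = -\lambda_0 = 0$, which is \eqref{eqn:Full_function_gradient}. I would close by noting that since $\x_0$ is the only true decision variable and the forward model determines $\x_1,\dots,\x_{\Nt}$ as functions of it, the composite map $\x_0 \mapsto \mathcal{Q}$ has gradient $-\lambda_0$, which both justifies the name ``reduced gradient'' and shows the stated system is the complete set of first-order conditions.

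The one place requiring care — the ``main obstacle,'' though it is bookkeeping rather than a deep difficulty — is the indexing and the boundary cases at $i=0$ and $i=\Nt$: making sure the multiplier count matches the constraint count, that the $\x_0$ derivative genuinely lacks the $-\lambda_0$ contribution that interior nodes have, and that the transpose/adjoint relationship $\M_{i+1,i}^* = (\partial_{\x_i}\M_{i,i+1})^T$ is stated consistently with the paper's notation. I would also briefly remark that these are \emph{necessary} conditions only (no constraint qualification issue arises since the constraints $\x_{i+1}-\M_{i,i+1}(\x_i,\mu)=0$ have a Jacobian in the $\x_{i+1}$ block equal to $-\I$, hence full row rank, so LICQ holds automatically), which legitimizes the Lagrange multiplier approach without further assumptions.
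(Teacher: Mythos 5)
Your approach is essentially the paper's: form the Lagrangian for the equality-constrained problem \eqref{eqn::Qoi_optimization} and obtain the forward model, the adjoint recursion, and the reduced gradient from stationarity with respect to $\lambda_i$, $\x_i$, and $\x_0$ respectively. The one substantive issue is a sign-convention inconsistency: you write the constraint term as $\lambda_{i+1}^T\bigl(\M_{i,i+1}(\x_i,\mu)-\x_{i+1}\bigr)$, whereas the paper uses $\lambda_{i+1}^T\bigl(\x_{i+1}-\M_{i,i+1}(\x_i,\mu)\bigr)$. With the Lagrangian as you wrote it, stationarity in $\x_{\Nt}$ gives $\lambda_{\Nt}=+\left(\partial r_{\Nt}/\partial\x_{\Nt}\right)^T$, the interior recursion comes out as $\lambda_i=\mathbf{M}_{i+1,i}^*\lambda_{i+1}+\left(\partial r_i/\partial\x_i\right)^T$, and the reduced gradient is $+\lambda_0$; that is, your multipliers are the negatives of the paper's, so the formulas \eqref{eqn:Full_adjoint_model}--\eqref{eqn:Full_function_gradient} that you assert do not actually follow from the Lagrangian you set up. The fix is cosmetic (flip the sign of the constraint term, or relabel $\lambda\mapsto-\lambda$ throughout), and the rest of your argument --- the bookkeeping at the boundary indices $i=0$ and $i=\Nt$, the identification $\mathbf{M}_{i+1,i}^*=\left(\partial\M_{i,i+1}/\partial\x_i\right)^T$, the interpretation of $-\lambda_0$ as the gradient of the composite map $\x_0\mapsto\mathcal{Q}$, and the LICQ observation (which the paper does not bother to state) --- is correct and consistent with the paper's proof.
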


\begin{proof}
Using the Lagrange multiplier technique the constrained optimization problem \eqref{eqn::Qoi_optimization} is  replaced with the unconstrained
optimization of the following Lagrangian function, $\mathcal{L}: \mathbb{R}^{\Ns} \to \mathbb{R}$
\begin{equation}\label{eqn:Lagrangian_cost_function}
\mathcal{L}({\mathbf x}_0) = \sum_{i=0}^{\Nt}~r_i\left(\x_i,\mu\right) + \sum_{i=0}^{N-1}{\mathbf \lambda}_{i+1}^T\big({\mathbf x}_{i+1}-\M_{i,i+1}
\left({\mathbf x}_i,\mu\right)\big),
\end{equation}
where ${\mathbf \lambda}_i \in \mathbb{R}^{N_\textnormal{state}}$ is the Lagrange multipliers vector at observation time $t_i$.

 An infinitesimal change in $\mathcal{L}$ due to an infinitesimal  change $\delta {\mathbf x}_0$  in  ${\mathbf x}_0$ is
\begin{equation}\label{eqn:Lagrangian_cost_function_variationI}
\begin{split}
 \delta \mathcal{L}({\mathbf x}_0) &= - \sum_{i=0}^{\Nt} \frac{\partial r_i}{\partial \bx_i}(\bx_i,\mu)^T \delta \bx_i + \\
 &\sum_{i=0}^{\Nt-1} \lambda_{i+1}^T\big(\delta {\mathbf x}_{i+1} - {\mathbf M}_{i,i+1}\delta {\mathbf x}_i\big) + \sum_{i=0}^{\Nt-1} \delta
 {\mathbf \lambda}_{i+1}^T\big({\mathbf x}_{i+1} - \M_{i,i+1}\left({\mathbf x}_i,\mu\right)\big), \\
\end{split}
\end{equation}
where $\delta {\mathbf x}_i = \frac{\partial {\mathbf x}_i}{\partial {\mathbf x}_0} \delta {\mathbf x}_0$ and ${\mathbf M}_{i,i+1}$ is the
Jacobian matrix of $\M_{i,i+1}$ with respect to $\x_i$ for all time instances $t_i$, $i=0,\dots,\Nt-1$, at ${\bf x}_i$,
\begin{equation*}\label{eqn:tangent_linear_operators}
 {\mathbf M}_{i,i+1} = \frac{\partial \M_{i,i+1}}{\partial {\mathbf x}_i}({\mathbf x}_i,\mu) \in \mathbb{R}^{N_\textnormal{state} \times N_\textnormal{state}}.
\end{equation*}
The corresponding adjoint operator is ${\bf M}_{i+1,i}^* \in
\mathbb{R}^{N_\textnormal{state} \times N_\textnormal{state}}$ and satisfies
\begin{equation*}\label{eqn:adjoint_operators}
 \langle {\mathbf M}_{i,i+1} z_1, z_2 \rangle_{\mathbb{R}^{N_{\textnormal{state}}}} = \langle z_1, {\mathbf M}^*_{i+1,i} z_2 \rangle_{\mathbb{R}^{N_{\textnormal{state}}}},\quad \forall z_1,~z_2 \in \mathcal {\mathbb{R}^{N_{\textnormal{state}}}},\\
 \end{equation*}
where $\langle \cdot , \cdot \rangle_{\mathbb{R}^{N_{\textnormal{state}}}},$ is the corresponding Euclidian product. This means that the adjoint
operator of ${\mathbf M}_{i,i+1}$ is nothing but its transpose. To underline that the corresponding adjoint model is running backward in time we denote it by
${\bf M}_{i+1,i}^*$.

After rearranging the above equation and using again the definition of the adjoint operators we obtain
\begin{equation}\label{eqn:Lagrangian_cost_function_variationII}
\begin{split}
 &  \delta\mathcal{L}({\mathbf x}_0) = \delta \bx_{\Nt}^T\big( \frac{\partial r_{\Nt}}{\partial \bx_{\Nt}}(\bx_{\Nt}) + {\mathbf \lambda}_{\Nt} \big)+
 \sum_{i=1}^{N-1}\delta {\mathbf x}_i^T \bigg({\mathbf \lambda}_i - {\mathbf M}_{i+1,i}^*\mathbf{\lambda}_{i+1} + \frac{\partial r_{i}}{\partial \bx_{i}}(\bx_{i})\bigg) \\
 & -\delta {\mathbf x}_0^T\bigg({\mathbf M}_{1,0}^*\mathbf{\lambda}_{1} - \frac{\partial r_{0}}{\partial \bx_{0}}(\bx_{0}) \bigg) +
 \sum_{i=0}^{N-1}\delta {\mathbf \lambda}_{i+1}^T \big({\mathbf x}_{i+1} - \M_{i,i+1}\left({\mathbf x}_i,\mu\right)\big).
\end{split}
\end{equation}
By setting to zero the perturbations with respect to $\delta {\mathbf \lambda}_i$ and $\delta \bx_i$, $i=0,\ldots,\Nt$, the first order necessary optimality conditions \eqref{eqn:KKT_Full_forward_model} are obtained.
\end{proof}

While the gradient is equal to zero at the optimum, the formula \eqref{eqn:Full_function_gradient} can still be used to evaluate the gradient of $\mathcal{Q}$ away from the optimum
\begin{equation}
\label{eqn::first_order_approx_gradient}
\nabla_{{\bf x}_0}\mathcal{Q} = \nabla_{{\bf x}_0}\mathcal{L}  = - {\mathbf \lambda}_0.
\end{equation}
Then for ${\bar \bx_0} \in \mathbb{R}^{\Ns}$ in a vicinity of any $\bx_0  \in \mathbb{R}^{\Ns}$, the following first order approximation holds
\begin{equation}
\label{eqn::first_order_approx_adjoint}
\mathcal{Q}({\bar \bx_0},\mu) - \mathcal{Q}({\bx_0},\mu) \approx - {\mathbf \lambda}_0^T\cdot \big({\bar \bx_0} - {\bx_0}\big).
\end{equation}
%

\subsection{Error estimates}\label{sec:Errors_estimates}

We now introduce the first a-posteriori error estimation result.

\begin{theorem}[A-posteriori error estimation]
\label{th::aposteriori_res1}
Let $\x = \{\x_0,~\x_1,\ldots,~\x_{\Nt}\}$ be the solution of the  high-resolution \eqref{eqn:full_forward_model_init} model, and $\widehat{\bx} =\{\widehat{\bx}_0,\widehat{\bx}_1,\ldots,\widehat{\bx}_{\Nt}\}$ the projection of reduced order model solution\eqref{eqn:reduced_forward_model_init} onto the full space. Moreover, let $\x^i = \{{\widehat{\bx}_i},~\x^i_{i+1},\ldots,\x^i_{\Nt}\}$ be the partial trajectories obtained via the full model \eqref{eqn:full_forward_model_init} using as initial conditions the solution of reduced order model \eqref{eqn:reduced_forward_model_init} at time $t_i$ projected onto the full space, i.e. ${\widehat{\bx}_i} = U\, {\tx_i}$ and
\begin{equation}
\label{eqn:partial_trajectory}
 \x^i_{\ell} = \M_{i,\ell}\left({\widehat{\bx}_i},\mu\right), \quad \ell=i+1,\dots,\Nt,
 \quad i=0,\dots,\Nt-1.
\end{equation}
The partial trajectory $\x^i$ contains only $\Nt-i+1$ time steps.

Assume that the reduced order model solution $\widehat{\x}_{i}$ is in a neighborhood of the high-resolution model solution $\x_i,~i=0,\ldots,\Nt$ and if the assumptions of Theorem \ref{th::first_order_nc} hold, then the first order estimation of the error between the quantity of interest computed with the high-fidelity and reduced order models \eqref{eqn::error_qoi} is given by
\begin{equation}
\label{eqn::aposteriori1}
\varepsilon(\mu) \approx - \sum_{i=0}^{Nt} {\widehat{\mathbf \lambda}}_i^T \cdot \Delta \x_i,
\end{equation}
where the model residuals are
\begin{equation}\label{eqn::deltax}
\Delta \x_0 = \x_0 - \widehat{\x}_0, \quad \Delta \x_i = \x^{i-1}_i - \widehat{\x}_i, ~~ i = 1,\dots, N_t,
\end{equation}
and ${\widehat{\mathbf \lambda}}_i,~i=0,\ldots,\Nt-1$,  is the solution of the full adjoint model \eqref{eqn:Full_adjoint_model} linearized about the trajectory  $\x^i,~i=0,\ldots,\Nt-1.$ At the final time step $$\widehat{\mathbf \lambda}_{\Nt} = -(\partial r_{\Nt}/\partial \x_{\Nt})^T(\widehat{\x}_{Nt},\mu).$$
\end{theorem}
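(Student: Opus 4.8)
The plan is to express $\varepsilon(\mu)$ as a telescoping sum over a family of hybrid trajectories that interpolate between the high-fidelity solution $\x$ and the projected reduced solution $\widehat\x$, and then to linearize each increment with the first order formula \eqref{eqn::first_order_approx_adjoint}. For $i=0,\dots,\Nt$ set
\[
\mathcal{Q}_i := \sum_{\ell=0}^{i-1} r_\ell(\widehat\x_\ell,\mu) + \sum_{\ell=i}^{\Nt} r_\ell(\x^i_\ell,\mu),
\]
where $\x^i$ is the partial trajectory \eqref{eqn:partial_trajectory} started from $\widehat\x_i$ with the convention $\x^i_i=\widehat\x_i$; it is also convenient to write $\x^{-1}:=\x$ and $\mathcal{Q}_{-1}:=\mathcal{Q}(\x,\mu)$, which is consistent with $\Delta\x_0=\x_0-\widehat\x_0$ in \eqref{eqn::deltax}. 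Then $\mathcal{Q}_{-1}=\mathcal{Q}(\x,\mu)$ and $\mathcal{Q}_{\Nt}=\sum_{\ell=0}^{\Nt}r_\ell(\widehat\x_\ell,\mu)=\mathcal{Q}(\widehat\x,\mu)$, so that
\[
\varepsilon(\mu)=\mathcal{Q}_{-1}-\mathcal{Q}_{\Nt}=\sum_{i=-1}^{\Nt-1}\bigl(\mathcal{Q}_i-\mathcal{Q}_{i+1}\bigr).
\]

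Next I would evaluate a single increment. Let $\mathcal{Q}^{j}(\mathbf z,\mu):=\sum_{\ell=j}^{\Nt} r_\ell\bigl(\M_{j,\ell}(\mathbf z,\mu),\mu\bigr)$ be the quantity of interest of the sub-problem on $[t_j,t_{\Nt}]$ with state $\mathbf z$ at $t_j$ (with $\M_{j,j}$ the identity). Using $\x^i_i=\widehat\x_i$, the definition of $\mathcal{Q}_i$, and the composition property $\M_{i,\ell}=\M_{i+1,\ell}\circ\M_{i,i+1}$ already built into \eqref{eqn:partial_trajectory}, a short reindexing gives
\[
\mathcal{Q}_i-\mathcal{Q}_{i+1}= \mathcal{Q}^{\,i+1}\!\bigl(\x^i_{i+1},\mu\bigr)-\mathcal{Q}^{\,i+1}\!\bigl(\widehat\x_{i+1},\mu\bigr),\qquad i=-1,\dots,\Nt-1,
\]
since for $\ell\ge i+1$ the states $\x^i_\ell$ and $\x^{i+1}_\ell$ are the full-model evolutions of $\x^i_{i+1}=\M_{i,i+1}(\widehat\x_i,\mu)$ and of $\x^{i+1}_{i+1}=\widehat\x_{i+1}$, respectively. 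By Theorem \ref{th::first_order_nc} applied to the window $[t_{i+1},t_{\Nt}]$, the reduced gradient of $\mathcal{Q}^{\,i+1}$ at $\widehat\x_{i+1}$ equals $-\widehat{\mathbf \lambda}_{i+1}$, the time-$t_{i+1}$ value of the adjoint recursion \eqref{eqn:Full_adjoint_model} run backward from $t_{\Nt}$ and linearized about $\x^{i+1}$ (for $i+1=\Nt$ this reduces to $\widehat{\mathbf \lambda}_{\Nt}=-(\partial r_{\Nt}/\partial\x_{\Nt})^T(\widehat\x_{\Nt},\mu)$). Hence \eqref{eqn::first_order_approx_adjoint}, applied to $\mathcal{Q}^{\,i+1}$ with the increment recognized as the residual $\Delta\x_{i+1}=\x^i_{i+1}-\widehat\x_{i+1}$ of \eqref{eqn::deltax}, yields $\mathcal{Q}_i-\mathcal{Q}_{i+1}\approx -\,\widehat{\mathbf \lambda}_{i+1}^T\Delta\x_{i+1}$. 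Summing over $i=-1,\dots,\Nt-1$ produces \eqref{eqn::aposteriori1}.

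The reindexing identity and the dynamic-programming recursion $\mathcal{Q}^{j}(\mathbf z,\mu)=r_j(\mathbf z,\mu)+\mathcal{Q}^{\,j+1}\bigl(\M_{j,j+1}(\mathbf z,\mu),\mu\bigr)$ are routine. The step requiring care — and what I expect to be the main obstacle — is controlling the linearization errors: each application of \eqref{eqn::first_order_approx_adjoint} leaves a remainder quadratic in $\norm{\x^i_{i+1}-\widehat\x_{i+1}}$, and one must invoke the standing hypothesis that every $\widehat\x_i$ lies in a fixed neighborhood of $\x_i$ to guarantee (i) that the partial trajectories $\x^i$ stay where the $C^1$ regularity of $\M_{i,i+1}$ and $r_i$ (hence the tangent-linear and adjoint models) is valid, and (ii) that the accumulated $\Nt+1$ quadratic remainders remain of higher order than the estimate $-\sum_i\widehat{\mathbf \lambda}_i^T\Delta\x_i$ itself. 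I would also stress that the $\widehat{\mathbf \lambda}_i$ are independent objects, each tied to its own linearization trajectory $\x^i$; this per–time-step structure is exactly what is exploited by the adaptive algorithm of Section \ref{sec:DEIM_adapt}.
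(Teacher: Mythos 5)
Your proposal is correct and follows essentially the same route as the paper: a telescoping decomposition of $\varepsilon(\mu)$ over the partial trajectories $\x^i$, with each increment linearized via the adjoint-based first order formula \eqref{eqn::first_order_approx_adjoint} applied to the window quantity of interest $\mathcal{Q}^{i+1}$, whose gradient at $\widehat\x_{i+1}$ is $-\widehat{\mathbf\lambda}_{i+1}$ by Theorem \ref{th::first_order_nc}. Your version merely makes the telescoping explicit through the hybrid sums $\mathcal{Q}_i$ (and is somewhat more candid than the paper about the accumulation of the $\Nt+1$ quadratic remainders), but the underlying argument is the same.
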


\begin{proof}
For each partial trajectory $\x^i$ we introduce the associated quantity of interest
\begin{equation}\label{eqn::partial_qois}
  \mathcal{Q}^i(\widehat{\x}_{i},\mu) = r_i(\widehat{\x}_{i},\mu) + \sum_{j=i+1}^{\Nt}r_j(\x^i_{j},\mu),
  \quad i=0,1,..,\Nt-1.
\end{equation}

Since the trajectories are computed using the high-fidelity model, the gradient of each quantity of interest with respect to $\widehat{\x}_{i}$ is given by a relation analogous to \eqref{eqn::first_order_approx_gradient}. Since the reduced order model solution $\widehat{\x}_{i}$ lies in a neighborhood of the high-resolution model solution $\x_i,~i=0,\ldots,\Nt$, then from equation \eqref{eqn::first_order_approx_adjoint}, we obtain the following first order approximations
\begin{equation}\label{eqn::th_result1}
\begin{split}
   & \mathcal{Q}^0(\x_0,\mu) - \mathcal{Q}^0(\widehat{\bx}_0,\mu) =
   r_0(\x_0,\mu)-r_0(\widehat{\bx}_0,\mu) + \sum_{j=1}^{Nt} r_j(\x_j,\mu) - r_j(\x_j^0,\mu)  \\
   & \qquad \approx - \widehat{\lambda}_0^T\cdot\Delta \x_0,
\end{split}
\end{equation}
\begin{equation}\label{eqn::th_result2}
\begin{split}
& \mathcal{Q}^i(\x_i^{i-1},\mu) - \mathcal{Q}^i(\widehat{\bx}_i,\mu) =
 r_i(\x_i^{i-1},\mu)-r_i(\widehat{\bx}_i,\mu) + \sum_{j=i+1}^{Nt} r_j(\x_j^i,\mu) - r_j(\x_j^i,\mu)  \\
& \qquad \approx - \widehat{\lambda}_i^T\cdot\Delta \x_i,~i=1,..,\Nt-1,
\end{split}
\end{equation}
where $\widehat{\lambda}_i,~i=0,..,\Nt-1$ is the solution of the adjoint model \eqref{eqn:Full_adjoint_model} linearized at the trajectory $\x^i,~i=0,\ldots,\Nt-1.$

Since $r_{\Nt}$ is continuously differentiable the following first order estimation holds
\begin{equation}\label{eqn::th_result3}
r_{\Nt}(\x_{\Nt}^{\Nt-1},\mu)-r_{\Nt}(\widehat{\bx}_{\Nt},\mu) \approx  \frac{\partial r_{\Nt}}{\partial \x_{\Nt}}(\widehat{\x}_{Nt},\mu)\Delta \x_{Nt}.
\end{equation}

According to \eqref{eqn:Full_adjoint_model} we have $\widehat{\lambda}_{\Nt} = - (\partial r_{\Nt}/\partial \x_{\Nt})^T(\widehat{\x}_{Nt},\mu)$ and adding all three equations
\eqref{eqn::th_result1} - \eqref{eqn::th_result3}, we obtain

\begin{equation}\label{eqn:th_result4}
 \varepsilon(\mu) \approx - \sum_{i=0}^{Nt} \widehat{\lambda}_i^T\cdot\Delta \x_i.
\end{equation}
\end{proof}

Figure \ref{fig:Aposteriori_error_estimate} explains the procedure at an intuitive level. The discrepancy in the quantity of interest \eqref{eqn:hfqoi} computed with the high-resolution model solution (bottom trajectory) and projected reduced order model solution (top trajectory) can be estimated by adding all the intermediate dot products.
\begin{figure}[t!]
\centering
\includegraphics[trim=2cm 3.4cm 1cm 6cm ,clip=true,scale=0.6]{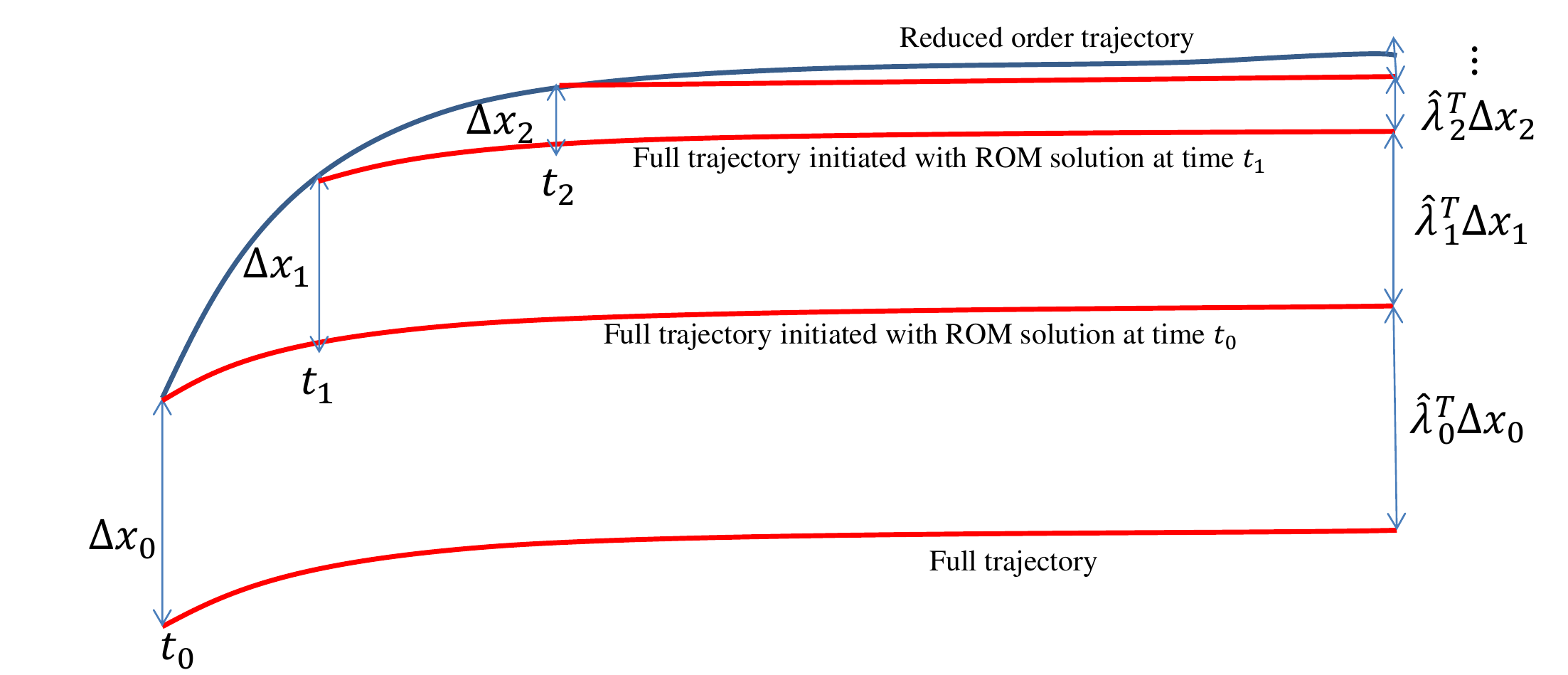}
\caption{Geometrical interpretation of the a-posteriori error estimate \eqref{eqn::aposteriori1}.}
\label{fig:Aposteriori_error_estimate}
\end{figure}
%

\subsection{Efficient computation of the error estimates}
\label{sec:fast_version}

The a-posteriori error estimation result developed in Theorem \ref{th::aposteriori_res1} requires one full, and several partial high-fidelity model runs
along with their associated high-fidelity adjoint model runs. Here we propose efficient a-posteriori estimation results for a general explicit and implicit time integration schemes.

\paragraph{Explicit Euler scheme} The general model introduced in \eqref{eqn:full_forward_model_init} can be described by
\begin{equation}
\label{eqn::explicit_Euler}
  \x_{i+1} = \x_i + h\,{\bf F}(\x_i,\mu),\quad i=0,..,\Nt-1,
\end{equation}
where $h$ is the selected discrete time step.

Using the reduced order solution $\tx_i$ of model \eqref{eqn:reduced_forward_model_init} we perform one time step integration with both high-fidelity and  reduced order models
\begin{eqnarray}
\label{eqn::fast_apost1a}
\x_{i+1}^i& = & U_{\mu}{\tx}_i + h\,\F(U_{\mu}{\tx}_i,{\mu}), \\
\label{eqn::fast_apost1b}
{\tx}_{i+1} & = & {\tx}_i +  h\,U_{\mu}^T\,V_{\mu}\,(P^TV_{\mu})^{-1}\,P^T\,{\F}(U_{\mu}{\tx}_{i},\mu).
\end{eqnarray}
By multiplying \eqref{eqn::fast_apost1b} with $U_\mu$ from the left and subtracting the result from \eqref{eqn::fast_apost1a} we obtain
\begin{eqnarray}
\label{eqn::fast_apost2}
 \Delta \x_{i+1} &=& \x_{i+1}^i - U_\mu\, {\tx}_{i+1} \\
 \nonumber
 &=& h\, \bigl(\mathbf{I}-U_{\mu}U_{\mu}^TV_{\mu}\,(P^TV_{\mu})^{-1}\,P^T\bigr)\, \F(\widehat{\x}_i,{\mu}) \\
 \nonumber
 &=& -\phi(\widehat{\x}_{i},\widehat{\x}_{i+1},\mu),  \quad i=0,..,\Nt-1
\end{eqnarray}
where $\phi: \mathbb{R}^\Ns \times \mathbb{R}^\Ns \times \mathcal{\tilde P}\to \mathbb{R}^\Ns $ is the residual associated with the explicit full model
\begin{equation}\label{eqn::residual_explicit}
\phi_{i+1}(\mu) = \phi(\widehat\x_{i},\widehat\x_{i+1},\mu) =  {\widehat\x}_{i+1} -\widehat\x_i - h\F({\widehat\x}_i,\mu),~i=0,..,\Nt-1.
\end{equation}
\begin{remark}\label{remark_fast_solution}
If the projected reduced order model solutions $\widehat{\x}$ is accurate with respect to the high-fidelity model solution
$\x$ then the partial trajectories $\x^i = \{{\widehat{\bx}_i},~\x^i_{i+1},..$, $\x^i_{\Nt}\}$, $i=0,..,\Nt-1$ can be approximated by
truncated trajectories obtained using one single high-fidelity model run $\{\x_0,~\x_1,..,~\x_{\Nt}\}$. Then for estimating
${\widehat{\mathbf \lambda}}_i^T,~i=0,\ldots,\Nt$ in \eqref{eqn::aposteriori1} we need only a single high-fidelity adjoint model run
instead of several partial high-fidelity adjoint model trajectories required by Theorem \ref{th::aposteriori_res1}.
\end{remark}
\begin{remark}\label{remark_fast_solutionb}
{Unlike the Galerkin POD residual, the DEIM based residual \eqref{eqn::residual_explicit} is not orthogonal to the reduced manifold $U_{\mu}$. Thus we can compute the mismatch in \eqref{eqn::error_qoi} by employing a reduced order adjoint model solution

\begin{equation}\label{eqn::fast_apost3}
  \varepsilon \approx - \big[ U_{\mu} \mathbf{\tilde  \lambda}_0\big]^T(\x_0 - \widehat{\x}_0) + \sum_{i=1}^{\Nt}\big[ U_{\mu} \mathbf{\tilde  \lambda}_{i}\big]^T\phi_{i}(\mu).
\end{equation}
By incorporating the high-fidelity adjoint snapshots for the construction of the reduced order basis \cite{cstefuanescu2015pod} we can design an accurate reduced order adjoint model. The error estimate proposed in \eqref{eqn::fast_apost3} requires only one reduced forward and one adjoint model runs as well as evaluating the residuals in \eqref{eqn::residual_explicit}}. Here $\mathbf{\tilde  \lambda}_{i} \in \mathbb{R}^k$ denotes the solution of the reduced adjoint model at time $t_i$
\begin{equation}\label{eqn::Explicit_reduced_adjoint_model}
\begin{split}
& \mathbf{\tilde \lambda}_{\Nt} = -U_{\mu}^T\frac{\partial r_{\Nt}}{\partial \bx_{\Nt}}(U_{\mu}\tx_{\Nt},\mu), \\
& \mathbf{\tilde \lambda}_i = U_{\mu}^T\big[I + hV_{\mu}\,(P^TV_{\mu})^{-1}\,P^T\frac{\partial \F}{\partial \x_i}(U_{\mu}\tx_i,\mu)\big]^TU_{\mu}\mathbf{\tilde \lambda}_{i+1}- U_{\mu}^T\frac{\partial r_{i}}{\partial \bx_{i}}(U_{\mu}\tx_{i},\mu), \\
& \quad \quad \quad \quad \quad \quad \quad \quad \quad \quad \quad \quad \quad \quad \quad  \quad \quad \quad \quad \quad \quad i=\Nt-1,..,0. \\
\end{split}
\end{equation}
\end{remark}
Techniques for fast evaluations of the reduced Jacobians exist in the literature and a comparison between them is available in \cite{cstefuanescu2017efficient}.

\begin{remark}\label{remark_global_model}
 The use of a reduced adjoint model \eqref{eqn::Explicit_reduced_adjoint_model} allows the a-posteriori error estimation result in Theorem
 \ref{th::aposteriori_res1} to be exploited for different parametric configuration. The more accurate the global reduced forward and adjoint models are, the more precisely
 the a-posteriori estimate in Theorem \ref{th::aposteriori_res1} is.
\end{remark}

\paragraph{Implicit Euler scheme} The model \eqref{eqn:full_forward_model_init} has the following form
\begin{equation}\label{eqn::implicit_Euler}
  \x_{i+1} = \x_i + h\,{\bf F}(\x_{i+1},{\mu}),\quad i=0,..,\Nt-1,
\end{equation}
where each component of $\F: \mathbb{R}^{\Ns \times \mathcal{\tilde P}} \to \mathbb{R}^{\Ns}$ is assumed to be of class $C^1$ in the first variable. We take one implicit Euler step with the full and reduced order models initialized at the reduced order solution:
\begin{eqnarray}
\label{eqn::fast_apost_implicit1}
\x_{i+1}^i& = & U_{\mu}\, {\tx}_i + h\, \F(\x_{i+1}^i,{\mu}), \\
 \label{eqn::fast_apost_implicit2}
{\tx}_{i+1} & = & {\tx}_i +  h\, U_{\mu}^T\,V_{\mu}\,(P^TV_{\mu})^{-1}\,P^T{\F}(U_{\mu}{\tx}_{i+1},{\mu}). \end{eqnarray}
By subtracting \eqref{eqn::fast_apost_implicit2} from \eqref{eqn::fast_apost_implicit1} after projecting equation \eqref{eqn::fast_apost_implicit2} to the full space we obtain
\begin{equation}
\label{eqn::fast_apost_implicit3}
\begin{split}
& \Delta \x_{i+1} = \x_{i+1}^i - U_\mu\, {\tx}_{i+1} \\
&= \F(\x_{i+1}^i,{\mu})-h\, U_{\mu}U_{\mu}^T\, V_{\mu}\,(P^TV_{\mu})^{-1}\,P^T{\F}(\widehat{\x}_{i+1},{\mu}) \\
&= \big[\F(\x_{i+1}^i,{\mu}) - \F(\widehat{\x}_{i+1},{\mu}) +
\F(\widehat{\x}_{i+1},{\mu}) \\
& \qquad - \ U_{\mu}U_{\mu}^TV_{\mu}\,(P^TV_{\mu})^{-1}\,P^T\F(\widehat{\x}_{i+1},{\mu})\big], \qquad i=0,\ldots,\Nt-1.
\end{split}
\end{equation}

The error \eqref{eqn::fast_apost_implicit3} is approximated to first order by
\begin{equation}
\label{eqn::fast_apost_implicit4}
\Delta \x_{i+1} \approx h\, \left[\frac{\partial \F}{\partial \x_{i+1}}(\widehat{\x}_{i+1},\mu) \Delta \x_{i+1} +(\mathbf{I} - U_{\mu}U_{\mu}^TV_{\mu}\,(P^TV_{\mu})^{-1}\,P^T)\F(\widehat{\x}_{i+1}) \right]
\end{equation}
and then
\begin{equation}
\label{eqn::fast_apost_implicit5}
 \Delta \x_{i+1}  \approx h\, \left[\mathbf{I} - h\frac{\partial \F}{\partial \x_{i+1}}(\widehat{\x}_{i+1},\mu) \right]^{-1}\, \left(\mathbf{I} - U_{\mu}U_{\mu}^TV_{\mu}\,(P^TV_{\mu})^{-1}\,P^T)\F(\widehat{\x}_{i+1}\right).
\end{equation}
The high-fidelity adjoint model solution $\mathbf{\lambda}_i$ of the implicit Euler model associated with the  quantity of interest defined in \eqref{eqn:hfqoi} is
\begin{equation}
\begin{split}
\label{eqn::adjoint_implicit_Euler}
& \lambda_{\Nt} = -\left(\frac{\partial r_{\Nt}}{ \partial \x_{\Nt}}\right)^T(\x_{\Nt},\mu) \,,  \\
& {\lambda}_i =   \left[\mathbf{I} - h\frac{\partial \F}{\partial \x_{i+1}}(\widehat{\x}_{i+1},\mu) \right]^{-T}\, \lambda_{i+1} - \left(\frac{\partial r_{i}}{ \partial \x_{i}}\right)^T(\x_{i},\mu) ,\quad i=\Nt-1,\dots , 0\,.
\end{split}
\end{equation}

As stated in Remarks \ref{remark_fast_solution} and \ref{remark_fast_solutionb}, we can assume  accurate forward and reduced order model solutions;i.e., $\lambda_{i+1} \approx {\widehat{\mathbf \lambda}}_{i+1}$. Left-multiplying equation \eqref{eqn::fast_apost_implicit5} by ${\widehat{\mathbf \lambda}}_{i+1}$ leads to
\begin{equation}
\label{eqn::fast_apost_implicit6}
\begin{split}
&\widehat{\mathbf{\lambda}}_{i+1}^T\, \Delta \x_{i+1} \\
&\approx h\,\lambda_{i+1}^T\,\left[\mathbf{I} - h\frac{\partial \F}{\partial \x_{i+1}}(\widehat{\x}_{i+1},\mu) \right]^{-1}\,  \left(\mathbf{I} - U_{\mu}U_{\mu}^TV_{\mu}\,(P^TV_{\mu})^{-1}\,P^T\right)\, \F(\widehat{\x}_{i+1},\mu) \\
& =  \left[\lambda_{i} + \frac{\partial r_{i}}{ \partial \x_{i}}(\x_{i},\mu)\right]^T \, \left(\mathbf{I} - U_{\mu}U_{\mu}^TV_{\mu}\,(P^TV_{\mu})^{-1}\,P^T\right)\,\F(\widehat{\x}_{i+1},\mu),\\
&\qquad i=0,\ldots,\Nt-1,
\end{split}
\end{equation}
where the second equality follows from the adjoint model \eqref{eqn::adjoint_implicit_Euler}.

The error in the quantity of interest \eqref{eqn::aposteriori1}  due to the usage of the reduced order model can be also estimated by
\begin{equation}\label{eqn::fast_apost_implicit8}
\varepsilon(\mu) \approx -\big[U_{\mu}\mathbf{\tilde \lambda}_0\big]^T(\x_0 - \widehat{\x}_0) + \sum_{i=0}^{\Nt-1} \phi_{i+1}^T \big[U_{\mu}\mathbf{\tilde
\lambda}_i+\frac{\partial r_i}{\partial \x_i}(U_{\mu}\tx_i,\mu)\big],
\end{equation}
where $\phi: \mathbb{R}^\Ns \times \mathbb{R}^\Ns \times \mathcal{\tilde P} \to \mathbb{R}^\Ns $ is now the residual associated with the implicit full model.
\begin{equation}\label{eqn::residual_implicit}
\phi_{i+1}(\mu) = \phi(\widehat\x_{i},\widehat\x_{i+1},\mu) =  \widehat{\x}_{i+1} -\widehat\x_i - h\F({\widehat\x}_{i+1},\mu),~i=0,..,\Nt-1.
\end{equation}

As in the case of the explicit model, we can compute the estimated error in \eqref{eqn::fast_apost_implicit8} using only one reduced forward and adjoint model runs
and the evaluation of the high-fidelity model residuals \eqref{eqn::residual_implicit}. The statement in Remark \ref{remark_global_model} is valid here too.

\section{Adaptive location of DEIM interpolation points}
\label{sec:DEIM_adapt}

Here we describe a novel algorithm for selection of the DEIM points such that the accuracy of the quantity of interest $\mathcal{Q}$  \eqref{eqn:hfqoi} evaluated using a reduced order model is increased. In \cite{peherstorferonline} the adaptivity mechanism changes the non-linear term reduced basis via rank-one updates. Here we do not change the basis, but rather we adaptively relocate the DEIM interpolation points.

Our adaptive strategy uses the a-posteriori error estimation result  \eqref{eqn::aposteriori1} and the fast computation techniques presented in Section \ref{sec:fast_version}. The individual contribution at each spatial location and time step to the error in the quantity of interest can be calculated by using the Hadamard product $\odot$ instead of the scalar products in \eqref{eqn::fast_apost3} and \eqref{eqn::fast_apost_implicit8}. The Hadamard products are the dual weighted residuals.

For the explicit case the dual weighted residuals are defined as
\begin{equation}\label{eqn::dw_residuals_explicit}
   z_0 = [ U \mathbf{\tilde  \lambda}_0\big]\odot(\x_0 - \widehat{\x}_0); \quad z_i = [ U \mathbf{\tilde  \lambda}_{i}\big]\odot \phi_{i},~i=1,..,\Nt,
\end{equation}
while for the implicit case they are defined as
\begin{equation}\label{eqn::dw_residuals_implicit}
   z_0 = [ U \mathbf{\tilde  \lambda}_0\big]\odot(\x_0 - \widehat{\x}_0); \quad z_i = \phi_i\odot[ U \mathbf{\tilde  \lambda}_{i-1} + \frac{\partial r_{i-1}}{\partial \x_{i-1}}(U\tx_{i-1},\mu)\big],~i=1,..,\Nt.
\end{equation}
The high-fidelity explicit and implicit models residuals $\phi_i(\mu)$ are defined in \eqref{eqn::residual_explicit} and \eqref{eqn::residual_implicit}.

Next the dual weighted residuals are collected into a matrix $Z$ and a singular vector decomposition is applied to extract the left singular vectors denoted by $W = \{w_0,~w_1,\ldots,w_m\}$.
Now we are ready to introduce our adaptive strategy. In addition to the non-linear basis $V_{\mu}$, the dual residual basis $W$ is used as input.

\begin{algorithm}
{\bf INPUT}: $\{v_\ell\}_{\ell=1}^m\subset\mathbb{R}^\Ns$ (linearly independent),~$\{w_\ell\}_{\ell=1}^m\subset\mathbb{R}^\Ns$ (linearly independent), $\alpha \in [0,1]$: \newline
{\bf OUTPUT}: $\bm{\rho}=[\rho_1,..,\rho_m]\in\mathbb{N}^m$
 \begin{algorithmic}[1]
 \State $\{\psi^v, \rho_1^v\}=\max|v_1|; ~~{\psi^1}\in \mathbb{R}$ is the largest absolute value among entries of $v_1$, and $\rho_1^1$ is its position (the smallest index taken in case of a tie).
 \State $\{\psi^w, \rho_1^w\}=\max|w_1|,~~{\psi^2}\in \mathbb{R}$.
 \State Set $\rho_1=\rho_1^v$ if $\psi^v \ge \psi^w$, or $\rho_1=\rho_1^w$ otherwise.
 \State $V_{\mu}:=[v_1]\in \mathbb{R}^n,~~P:=[e_{\rho_1}]\in \mathbb{R}^n,~~\bm{\rho}:=[\rho_1]\in \mathbb{N}.$
 \For{$\ell=2,..,m$}

      \State Solve $(P^TV_\mu)\, {c^v}=P^T\, v_\ell \textrm{  for } {c^v}\in \mathbb{R}^{\ell-1};~V_{\mu},P\in\mathbb{R}^{\Ns\times(\ell-1)}.$
      \State $r^v := v_\ell - V_{\mu}{c^v},~~r^v\in \mathbb{R}^{\Ns}.$
       \State Solve $(P^TV_{\mu})\,{ c^w}=P^T\,w_\ell \textrm{  for } {c^w}\in \mathbb{R}^{\ell-1};~V_{\mu},P\in\mathbb{R}^{\Ns\times(\ell-1)}.$
      \State $r^w := w_\ell-V_{\mu}\,{c^w},~~r^w\in \mathbb{R}^{\Ns}.$
      \State $\{\psi,\rho_\ell\}=\max\,\{\alpha|r^v|+(1-\alpha)|r^w|\}.$
      \State $V_{\mu} := [V_{\mu}~~v_\ell],~~P := [P~~e_{\rho_\ell}],~~\bm{\rho} := \left[
           \bm{\rho}^T ~~\rho_\ell\right]^T.$
   \EndFor
 \end{algorithmic}
 \caption{Adaptive location of DEIM interpolation points}
 \label{alg::DEIM_adaptiv}
\end{algorithm}

The residual $r^w$ estimates the error of the dual weighted residuals representations in the non-linear term subspace $V_{\mu}$. In contrast with the traditional DEIM approach, we place the points where the highest value of the combined residual $\alpha|r^v|+(1-\alpha)|r^w|$ is found. This allows to generate a satisfactory globally accurate non-linear reduced order term, while enhancing the accuracy in the spatial locations with the higher contributions to \eqref{eqn::error_qoi}. Here the parameter $\alpha$ is chosen heuristically, and finding an automated selection procedure is an open problem.

\section{Numerical experiments}\label{sec:numerical}
We evaluate the a-posteriori error estimation results and adaptive DEIM algorithm for two non-linear test problems, the one-dimensional Burgers and the two-dimensional
Shallow Water Equations. Proper orthogonal decomposition and discrete empirical interpolation methods are applied along with the Galerkin projection to generate the associated. We validate the a-posteriori error formulas \eqref{eqn::aposteriori1} and \eqref{eqn::fast_apost_implicit8} and show that the adaptive DEIM strategy proposed in Section \ref{sec:DEIM_adapt}
is successfully in reducing the error \eqref{eqn::error_qoi}. Our experiments are performed for the same parametric configurations for the Swallow Water Equations model. Additional efforts for the 1D-Burgers model confirm the
Remark \ref{remark_global_model} statement, thus enabling the use of the a-posteriori error results for parametric
configurations others than the one used to construct the reduced order bases for both state variables and non-linear terms.

\subsection{One-dimensional Burgers model}\label{subsec:Burgers}

Burgers' equation \cite{burgers1974nonlinear,burgers2013nonlinear} provides a simplification of the equations of fluid dynamics by omitting the pressure terms.
For a given viscosity coefficient $\mu$, the evolution of the fluid velocity $u$ is given by
\begin{equation}
\frac{\partial u}{\partial t} + u\, \frac{\partial u}{\partial x} = \mu\, \frac{\partial^2 u}{\partial x^2}, \quad x \in [0,L], \quad t \in (0,t_\textnormal{f}].
\label{eqn:Burgers-pde}
\end{equation}
We assume Dirichlet homogeneous boundary conditions $u(0,t) = u(L,t) = 0,~t \in (0,t_\textnormal{f}]$ and as initial conditions we use a seventh degree polynomial.

The discretization uses a uniform spatial mesh with $n$ grid points and $\Delta x=L/(n-1)$, and uniform temporal mesh with $N_t$ grid points and $\Delta t=t_\textnormal{f}/(N_t-1)$. The discrete solution vector is
denoted by ${\boldsymbol u}(t_{N})\approx [u(x_i,t_{N})] \in \mathbb{R}^{\Ns},~N=1,2,\ldots,N_t$ (the spatial dimension is $\Ns = n-2$ after eliminating the known
boundaries). The semi-discrete version of the 1D Burgers model \eqref{eqn:Burgers-pde} reads
\begin{equation}
\label{eqn:Burgers-sd}
 {\bf u}'  =  -{\bf u}\odot A_x{\boldsymbol u} + \mu A_{xx}{\boldsymbol u},
\end{equation}
where ${\bf u}'$ denotes the time derivative of ${\bf u}$. $A_x$, $A_{xx}\in \mathbb{R}^{\Ns\times \Ns}$ are the central difference first-order and second-order
space derivatives operators which also account for the boundary conditions, respectively.

The implicit Euler method is employed for time discretization and it is implemented in Matlab.
The non-linear algebraic systems are solved using Newton-Raphson method and the maximum number of Newton iterations allowed each time step is set to $50$.
The solution is considered accurate enough when the euclidian norm of the residual is less than $10^{-10}$.

\subsection{Numerical experiments with the one-dimensional Burgers model}\label{sec:numerical_Burgers}

We propose the following configuration $L~=~1$, $~t_f~=~1$, $n~=~201$, $\Ns~=~199,$ $\Nt~=~201$. The viscosity parameter is set initially to $\mu = 0.1$, and the
initial conditions are depicted in Figure \ref{fig::Burgers_solution}a. The quantity of interest depends only on some particular components of the solution at the final
time step (colored in green in Figure \ref{fig::Burgers_solution}b) and it is defined below
\begin{equation}\label{eqn:Burgers_qoi}
 \mathcal{Q}(u) = \sum_{i=2}^{21} u(x_i,t_{N_t})^2,~[x_2,x_{21}] = [0.05,0.1].
\end{equation}
\begin{figure}[t!]
  \centering
  \subfigure[Initial Conditions] {\includegraphics[scale=0.3]{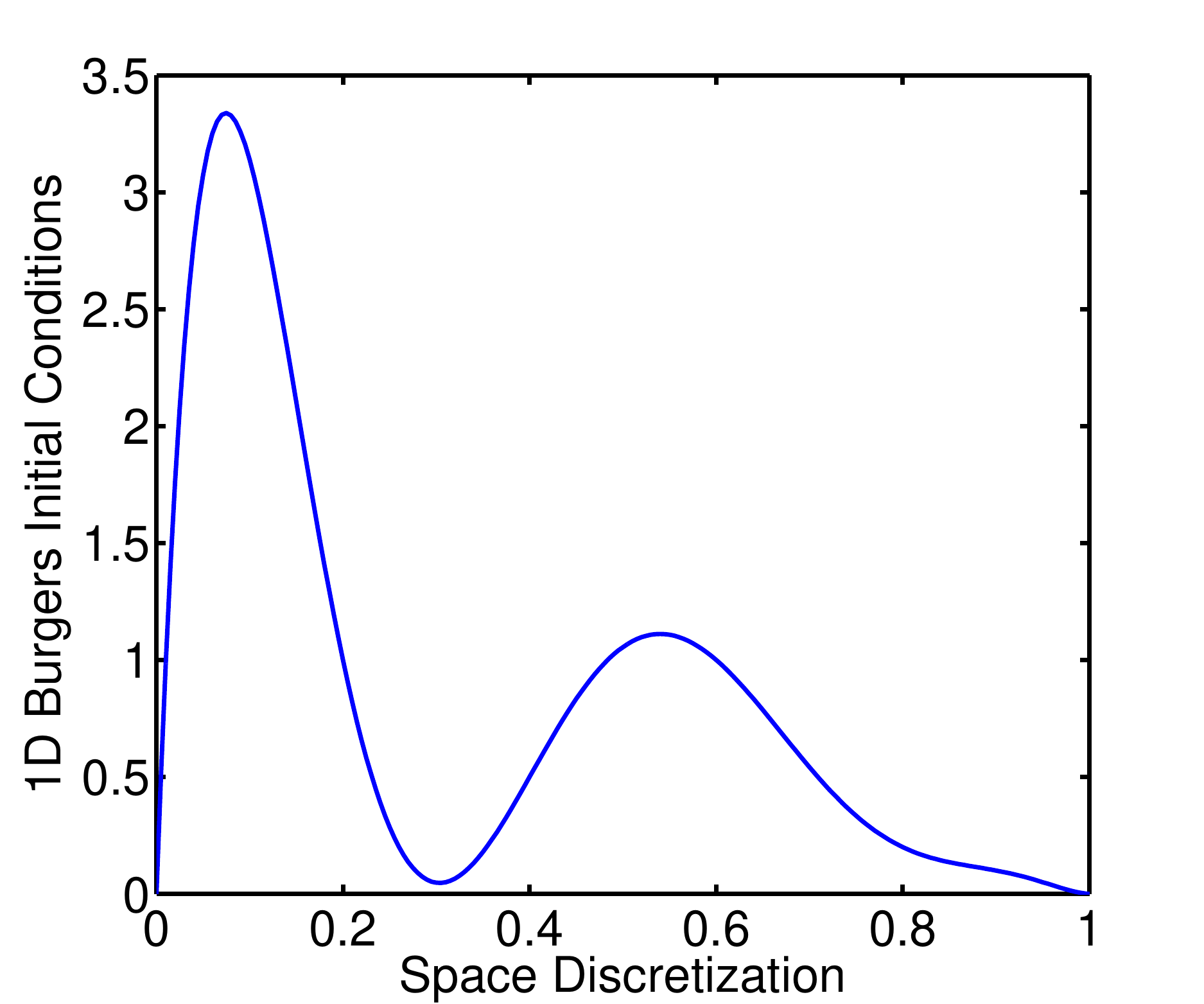}}
  \subfigure[Final time solutions ]{\includegraphics[scale=0.3]{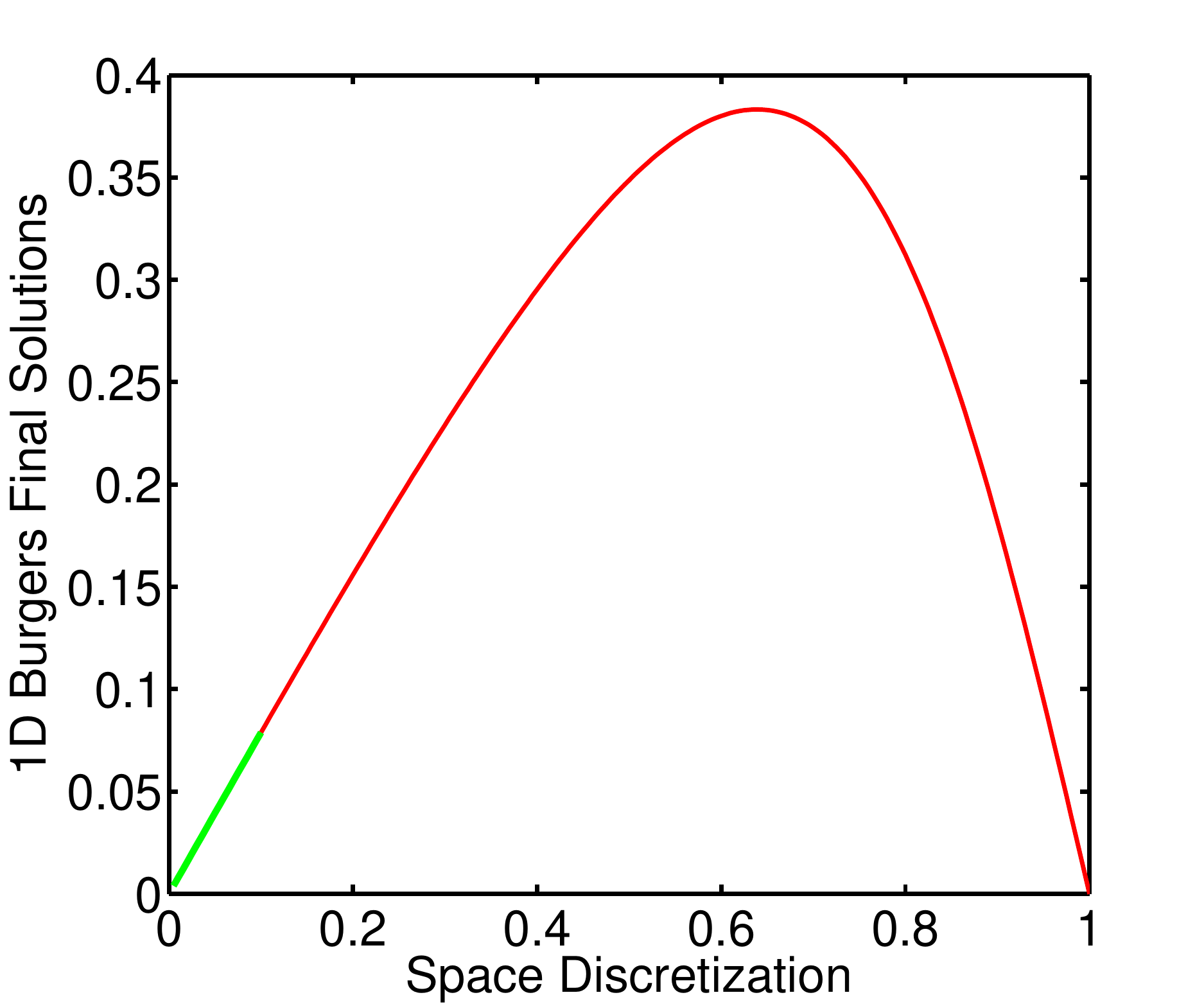}}
\caption{\label{fig::Burgers_solution}Initial conditions of Burgers model for $\mu = 0.1$ (left panel). The Burgers model solution at final time (right panel). The green color depicts the model trajectory used to compute the quantity of interest.}
\end{figure}

A number of $401$ snapshots are used to construct the reduced order basis for the state variable and include the solutions of both high-fidelity forward and adjoint
models. For the advection non-linear term we applied the singular value decomposition and generate the reduced order basis required by the DEIM approximation using
$201$ snapshots. The spectra of the snapshots matrices are illustrated in Figure \ref{fig::singular_values_Burgers}, and for POD basis dimensions larger than $25$,
the a-priori estimate suggests reduced order solutions errors smaller than $1e-5$ for the same parametric configuration. However this estimate does not include integration error and the overall error is usually underestimated.

\begin{figure}[t!]
\centering
\includegraphics[scale=0.34]{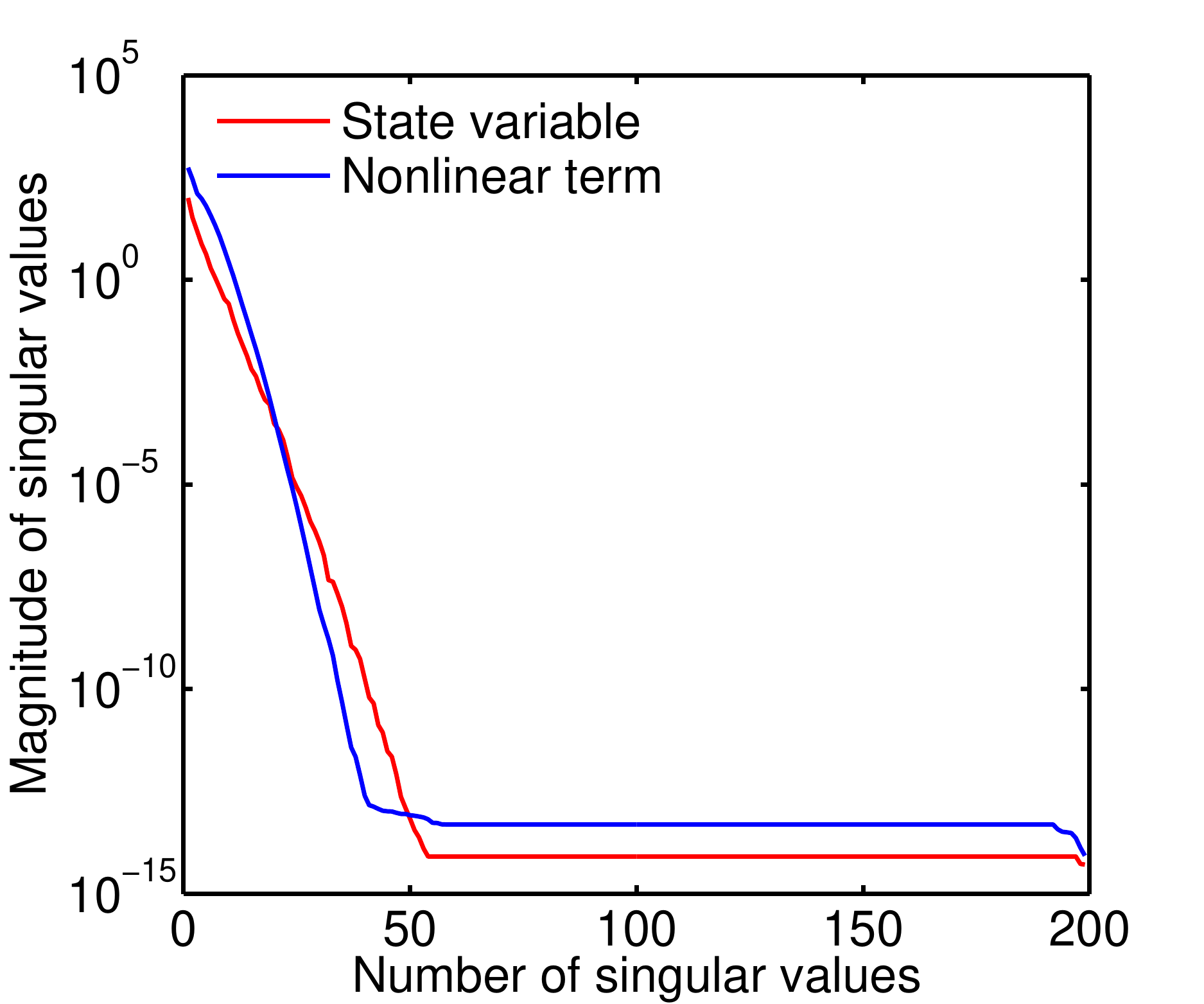}
\caption{\label{fig::singular_values_Burgers} Singular values of state variable and non-linear term of the 1D-Burgers model.}
\end{figure}

Since the implicit Euler method was used to discretize the 1D-Burgers model, we will verify the theoretical a-posteriori error estimate result described in \eqref{eqn::fast_apost_implicit8}. First we compute the error induced by estimating the quantity of interest \eqref{eqn:Burgers_qoi} using the POD/DEIM reduced order model and compare it against the dual weighted residuals sum. The formula in \eqref{eqn::fast_apost_implicit8} requires only reduced order model runs and the results are depicted in Figure \ref{fig::apost_same_config_Burgers}. By increasing the dimension of the reduced manifold, the error estimation gets closer to the true error as seen in Figure \ref{fig::apost_same_config_Burgers}(a) and for POD basis larger than $12$ the discrepancies are lower than $1e-03$. The more accurate the underlying reduced order model the more precise the proposed a-posteriori error estimation result is. The sensitivity of the estimated error \eqref{eqn::fast_apost_implicit8} with respect to the number of DEIM interpolation points is shown in \ref{fig::apost_same_config_Burgers}(b) where dimension of POD basis is set to $15$. For more than $14$ DEIM points the mismatches between the true and estimated errors are smaller then $1e-4$.

\begin{figure}[t!]
  \centering
  \subfigure[Number of DEIM points $=40$] {\includegraphics[scale=0.3]{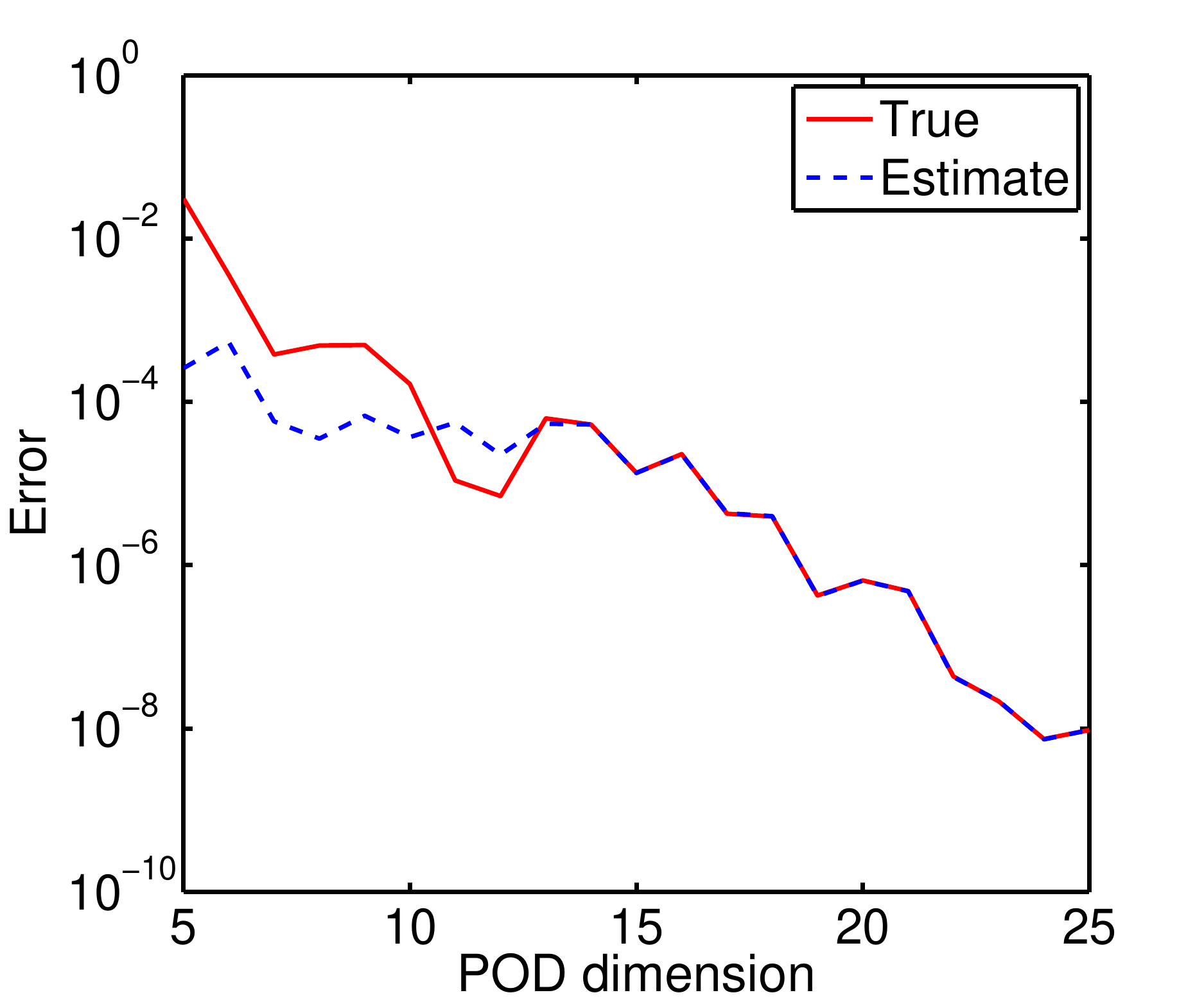}}
  \subfigure[Dimension of POD basis $=15$ ]{\includegraphics[scale=0.3]{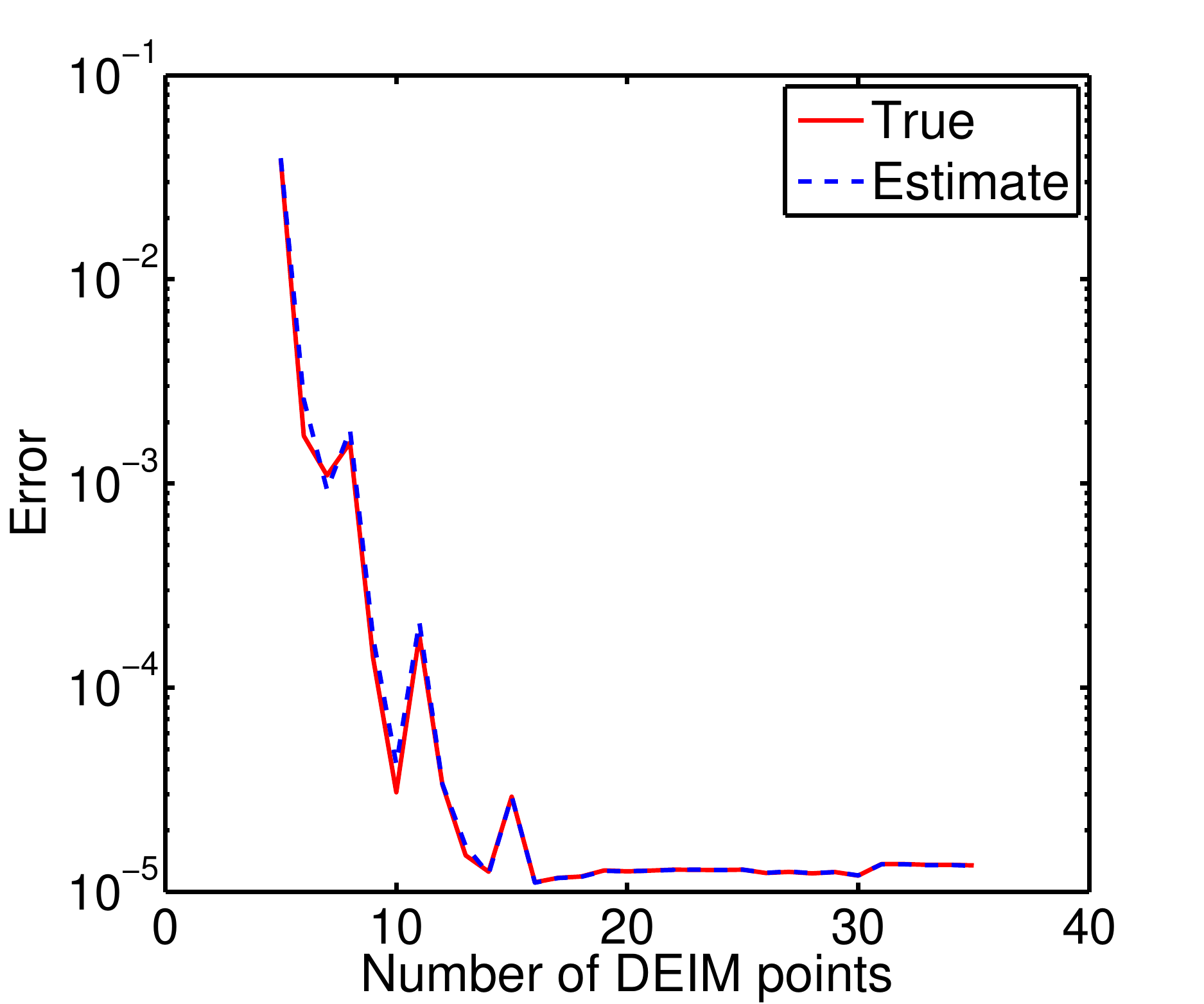}}
\caption{\label{fig::apost_same_config_Burgers}A-posteriori error estimates for the same parametric configuration - $\mu = 0.1$.}
\end{figure}

\begin{figure}[t!]
  \centering
  \subfigure[Number of DEIM points $=40$] {\includegraphics[scale=0.3]{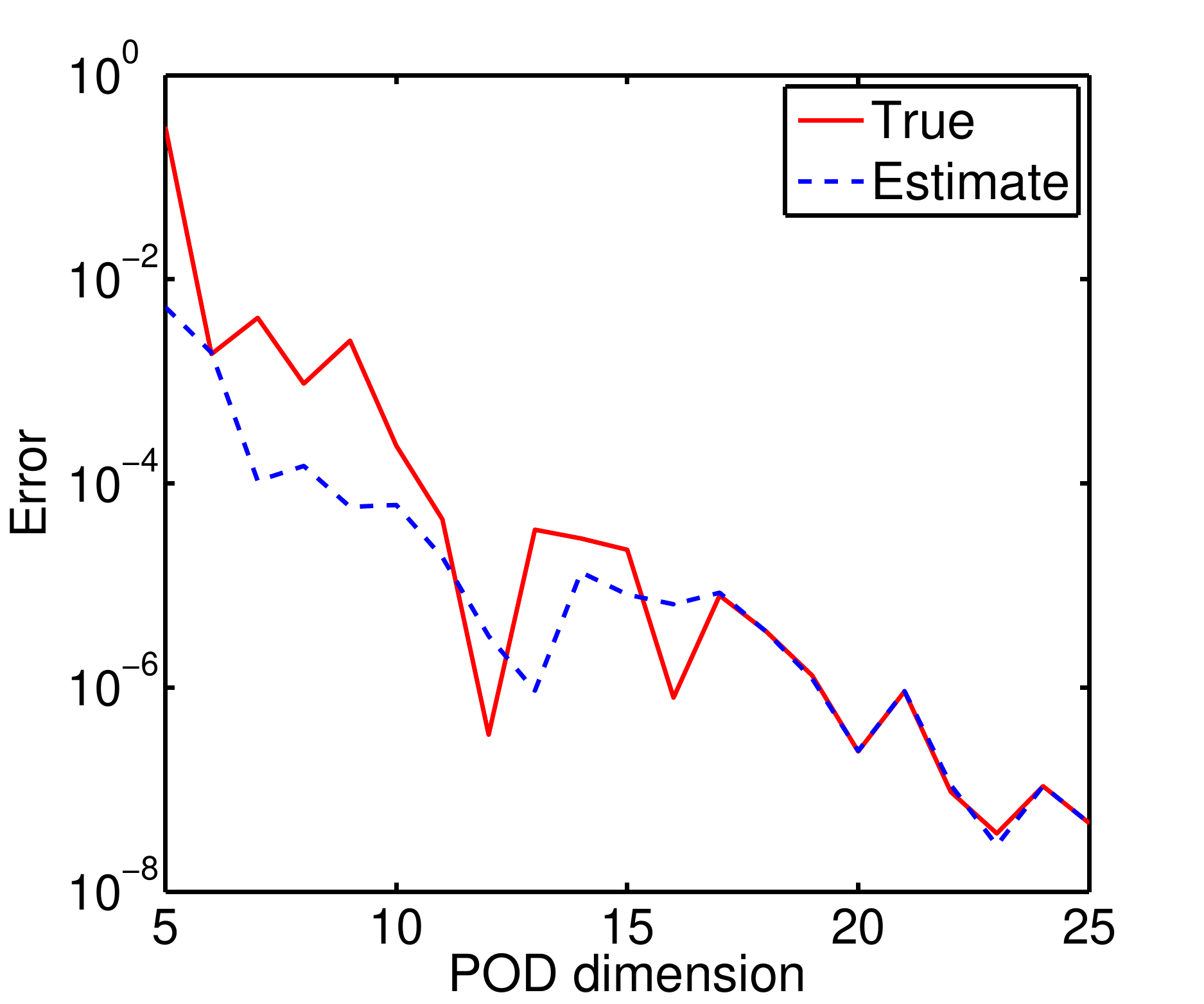}}
  \subfigure[Dimension of POD basis $=15$ ]{\includegraphics[scale=0.3]{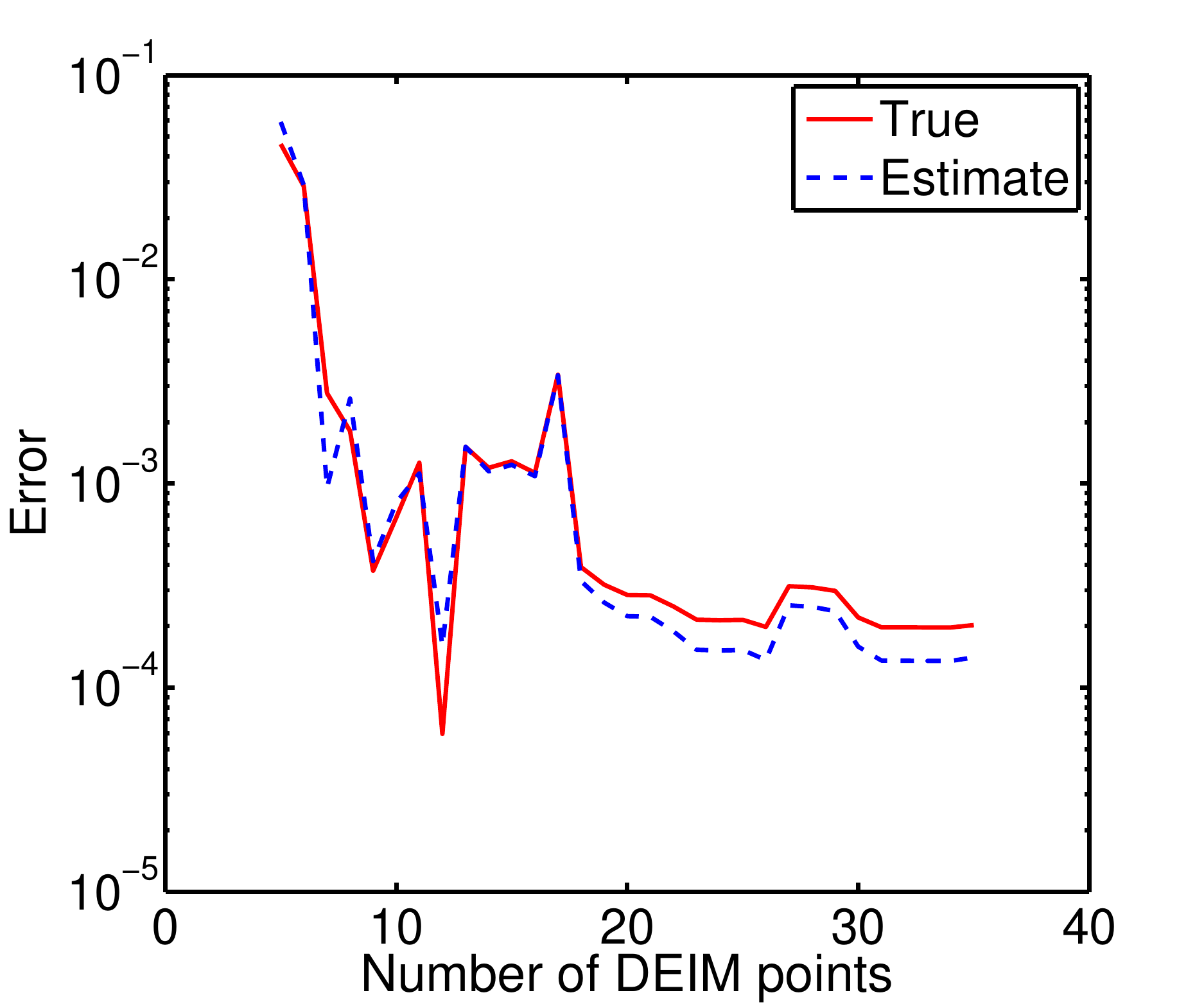}}
\caption{\label{fig::apost_diff_config_Burgers}A-posteriori error estimates for different parametric configuration - $\mu = 0.07$.}
\end{figure}

Next we test the a-posteriori estimation result \eqref{eqn::fast_apost_implicit8} for a different parametric configuration. As such, using bases computed for $\mu=0.1$, we aim to estimate the quantity of interest error for $\mu=0.07$. The approximation will be accurate as long the reduced order forward and adjoint models will be accurate with respect to the high fidelity models. First we set the number of DEIM points to $40$ and compute the estimates for various POD basis dimensions. A comparison against the true error is depicted in Figure \ref{fig::apost_diff_config_Burgers}(a). We notice larger discrepancies than in the case of using similar parametric configurations. For POD basis dimension larger than $17$, our estimation is very accurate.

By fixing the POD basis dimension to $15$ and varying the number of DEIM points, we set up another experiment and the results are shown in Figure \ref{fig::apost_diff_config_Burgers}(b). The a-posteriori estimation is accurate though is less precise than in the case of similar parametric configuration.

\begin{figure}[t!]
\centering
\includegraphics[scale=0.37]{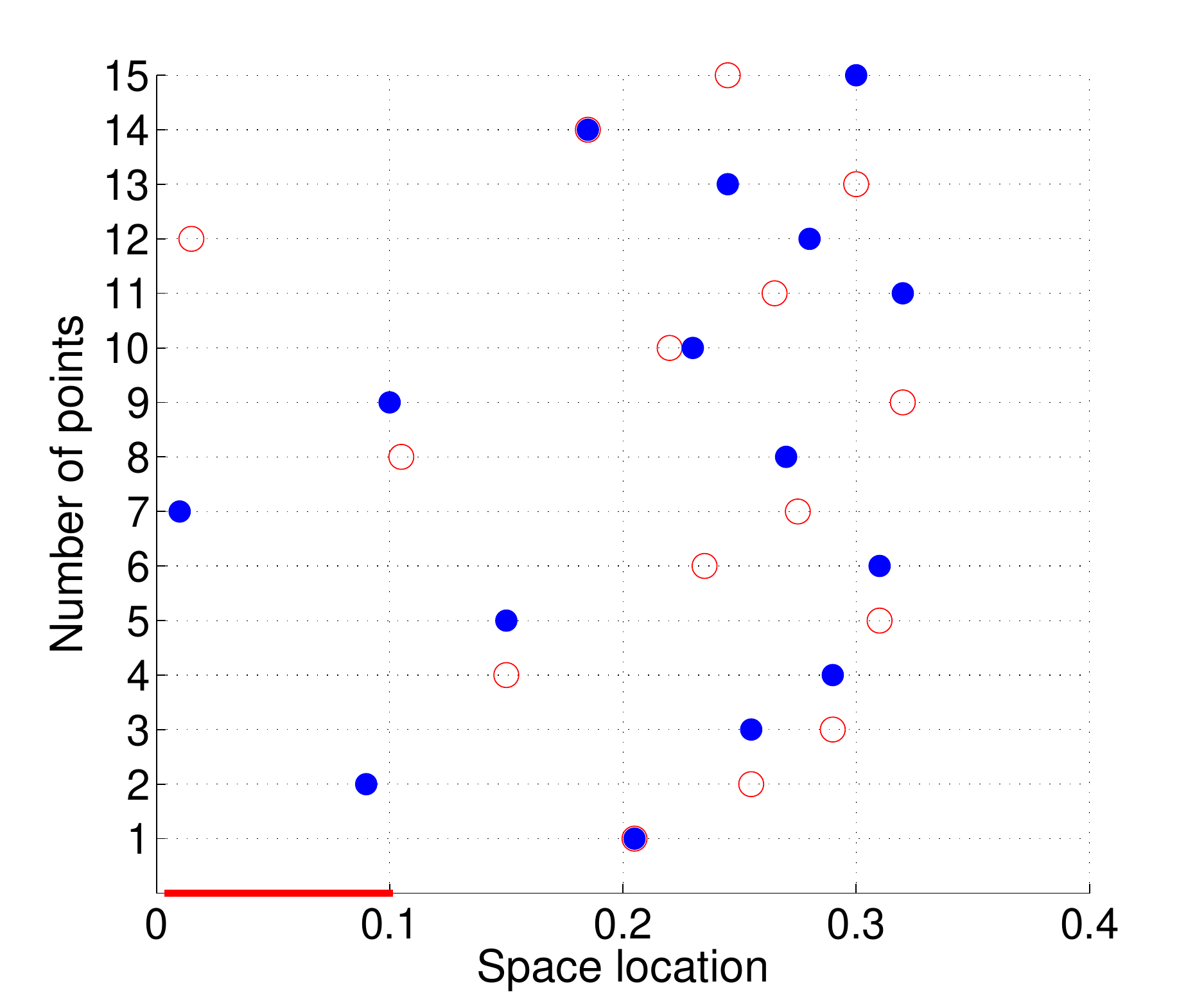}
\caption{\label{fig::DEIM_points_location} DEIM points locations. Adaptive points are depicted in blue and standard points in red.}
\end{figure}
Next we will prove the usefulness of the adaptive DEIM algorithm \ref{alg::DEIM_adaptiv} by enhancing the accuracy of the quantity of interest computed using the updated reduced order model. First we compute the dual weighted residuals \eqref{eqn::dw_residuals_implicit} and perform a singular value decomposition applied to generate left singular vectors. The first $15$ of the singular vectors are stored in matrix $W$ and then used, together with the non-linear term basis $V_{\mu}$, to initiate Algorithm \ref{alg::DEIM_adaptiv}. The parameter $\alpha$ is set to $0.5$

The locations of the DEIM points computed using Algorithms \ref{alg::DEIM} and \ref{alg::DEIM_adaptiv} are shown in Figure \ref{fig::DEIM_points_location}. The adaptive algorithm places more points in the spatial domain $[0.05,0.1]$ of the quantity of interest marked with red in Figure \ref{fig::DEIM_points_location}. The locations of the adaptive DEIM points change the matrix $P$ in DEIM approximation of the non-linear term \eqref{eqn::POD_DEIM_nonlinearity}.


\begin{figure}[t!]
  \centering
  \subfigure[Non-linear approximation] {\includegraphics[scale=0.3]{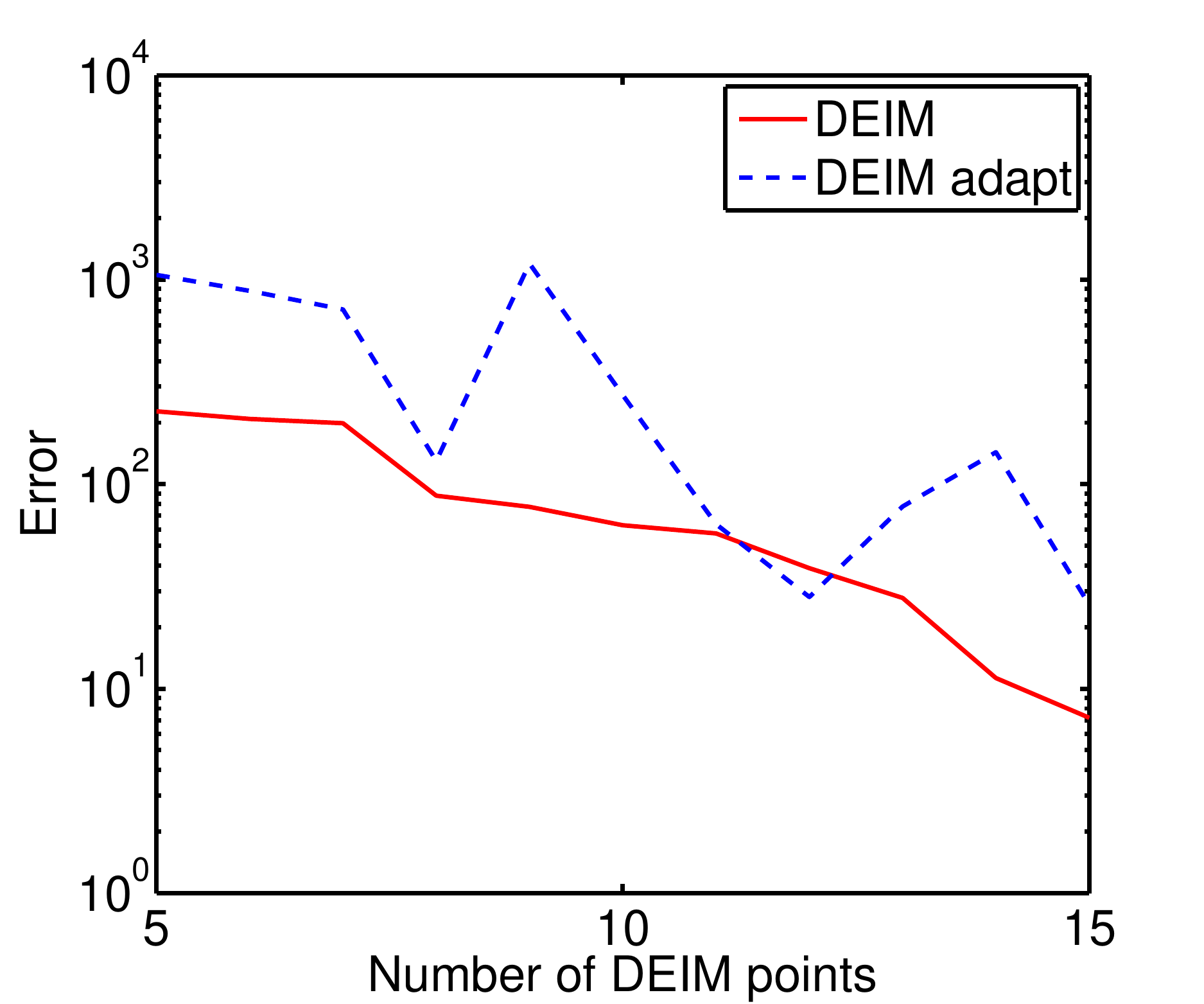}}
  \subfigure[Condition numbers]{\includegraphics[scale=0.3]{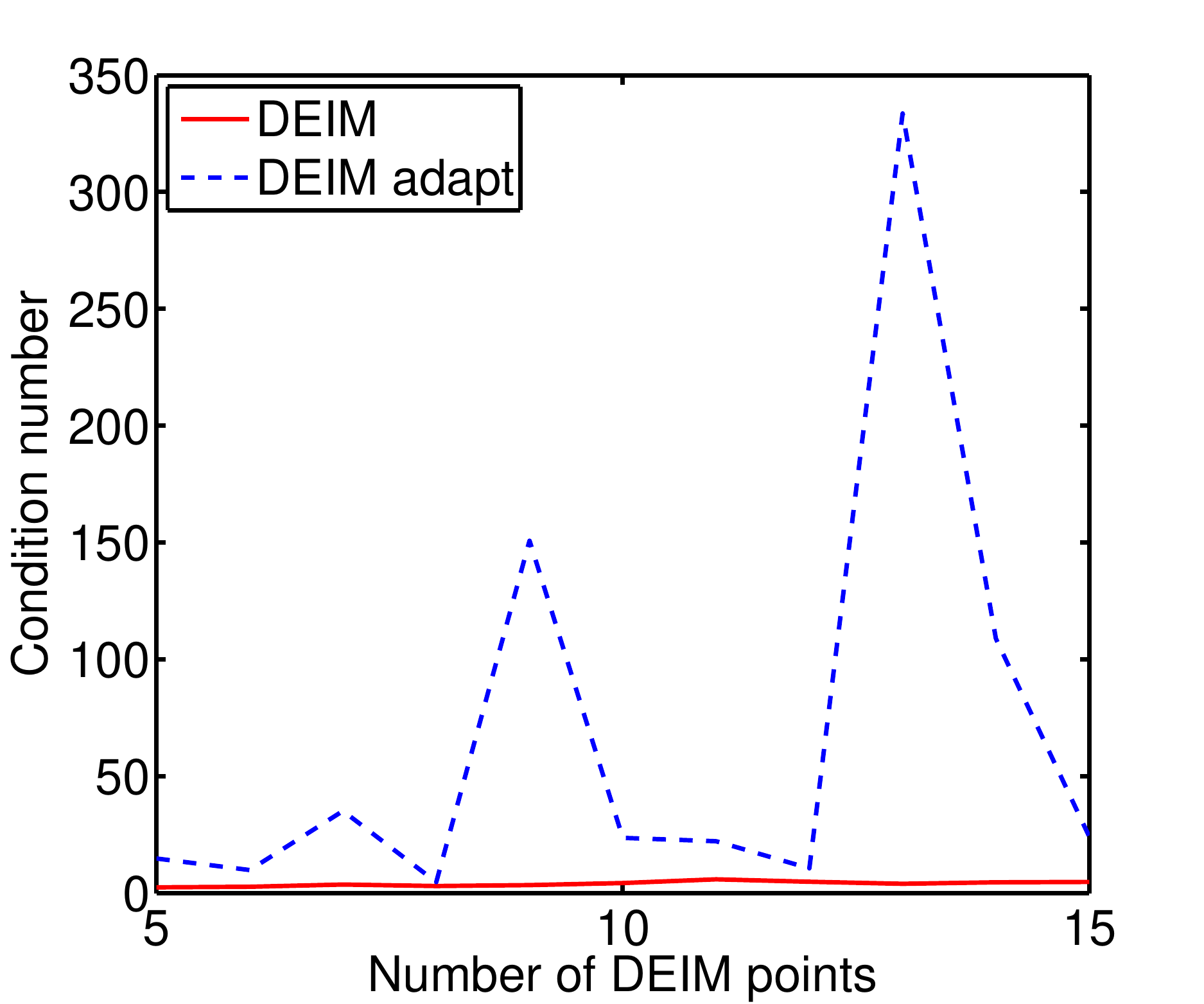}}
\caption{\label{fig::error_global_nonlinear_cond_number_Burgers} Comparison between traditional and adaptive DEIM strategies - Global non-linear term error at time step $t_2$ in the Euclidian norm (left panel); Condition number of matrix $P^TV$. }
\end{figure}

Figure \ref{fig::error_global_nonlinear_cond_number_Burgers}(a) illustrates the comparison between the 1D-Burgers advection term errors computed at time $t_2$ using the standard vs. adaptive DEIM methods. The number of points is varied and the results show that standard DEIM approximation is more accurate over the global spatial domain $[0,1]$. This is expected since the adaptive DEIM method is tailored to improve the accuracy of the quantity of interest and not the global accuracy. The condition number of the matrix $P^TV_{\mu}$ \eqref{eqn::POD_DEIM_nonlinearity} is increased in the case of adaptive DEIM approximaton, contributing to the global loss of accuracy. This result is shown in Figure \ref{fig::error_global_nonlinear_cond_number_Burgers}(b).

The adaptive DEIM approximation of the non-linear term leads to a different reduced order solution. The quantity of interest is computed using POD/DEIM standard and adaptive models and its errors are presented in Figure \ref{fig::adaptive_DEIM_efficiency_Burgers}. The adaptive model leads to more accurate quantities of interest confirming the usefulness of our a-posteriori error estimation results and adaptive DEIM algorithm.

\begin{figure}[t!]
\centering
\includegraphics[scale=0.37]{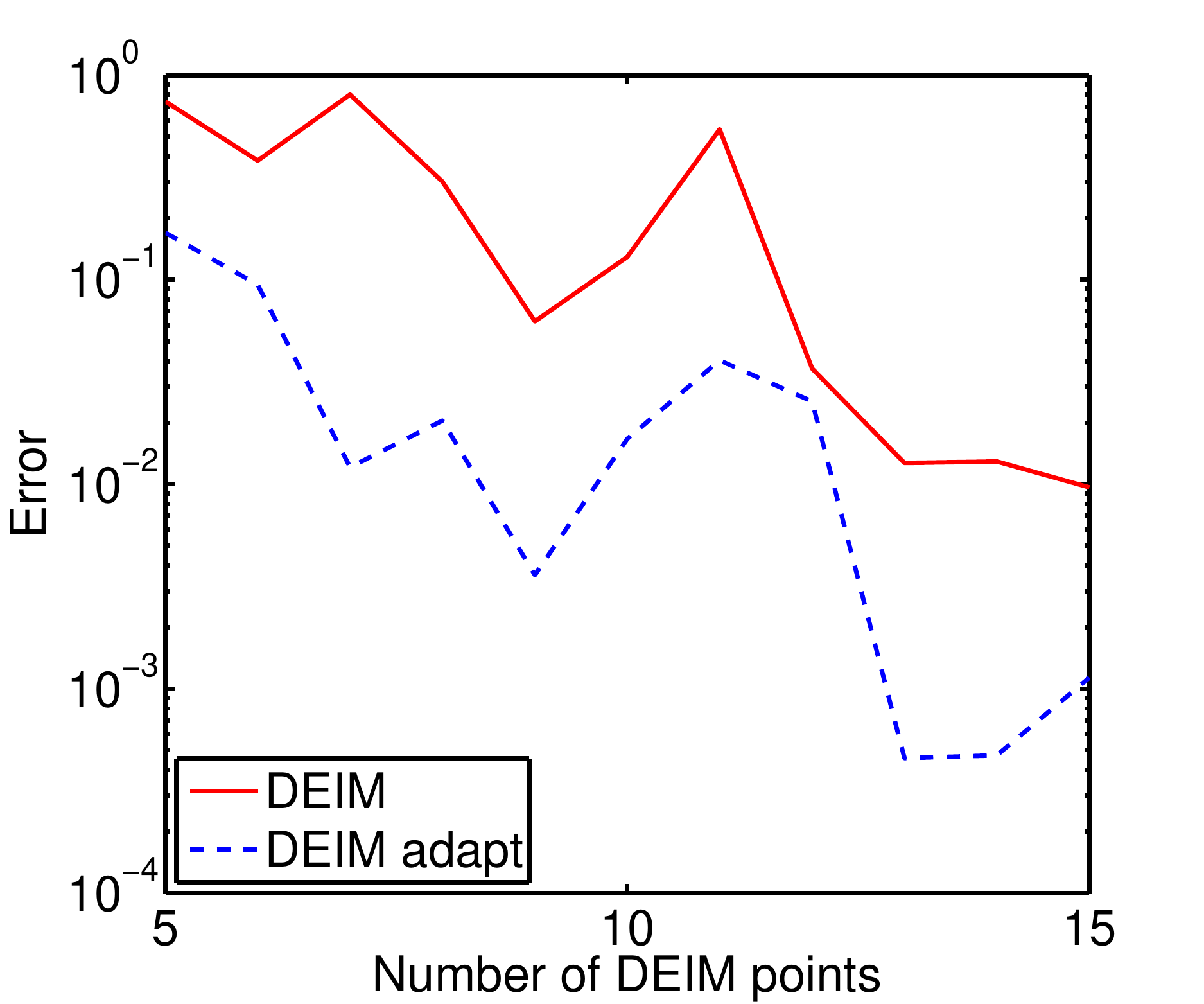}
\caption{\label{fig::adaptive_DEIM_efficiency_Burgers} Adaptive vs traditional DEIM errors approximation errors of the quantity of interest.}
\end{figure}


\subsection{Shallow Water Equations (SWE) model}\label{sec:SWE}

The SWE is a popular simple model for meteorological and oceanographic problems. SWE can be used to model Rossby and Kelvin waves in the atmosphere, rivers, lakes and oceans as well as gravity waves in a smaller domain. The alternating direction fully implicit (ADI) scheme \citep{Gus1971} considered in this paper is first order in both time and space and it is stable for large CFL condition numbers. It has been proven that the method is unconditionally stable for the linearized version of the SWE model. Other research work on this topic include efforts of \citet{FN1980} and \citet{NVG1986}.

We solve the SWE model using the $\beta$-plane approximation on a rectangular domain \citep{Gus1971}
\begin{equation}\label{eqn:swe-pde}
\frac{\partial w}{\partial t}=A(w)\,\frac{\partial w}{\partial x}+B(w)\,\frac{\partial w}{\partial y}+C(y)\,w,
\quad (x,y) \in [0,L] \times [0,D], \quad t\in(0,t_{\rm f}],
\end{equation}
where $w=(u,v,\phi)^T$ is a vector function, and $u,v$ are the velocity components in the $x$ and $y$ directions, respectively. The geopotential $\phi = 2\sqrt{gh}$ is computed by multiplying the depth of the fluid $h$ and the acceleration due to gravity  $g$.

The matrices $A$, $B$ and $C$ are
\[
A=-\begin{bmatrix}
           u&0&\phi/2\\
           0&u&0\\
           \phi/2&0&u \end{bmatrix}, \quad
B=-\begin{bmatrix}
           v&0&0\\
           0&v&\phi/2\\
           0&\phi/2&v \end{bmatrix}, \quad
C=\begin{bmatrix}
           0&f&0\\
           -f&0&0\\
           0&0&0 \end{bmatrix},
\]
where $f$ is the Coriolis term
\[
f=\hat f + \beta(y-D/2),\quad \beta=\frac{\partial f}{\partial y},
\]
with $\hat f$ and $\beta$ constants.

We assume periodic solutions in the $x$ direction for all three state variables.
In the $y$ direction
\[
v(x,0,t)=v(x,D,t)=0, \quad x\in[0,L], \quad t\in(0,t_{\rm f}],
\]
and Neumann boundary conditions are used for $u$ and $\phi$. The initial condition is $w(x,y,0)=\psi(x,y),~\psi:\mathbb{R}\times\mathbb{R}\rightarrow \mathbb{R},~(x,y)\in[0,L]\times[0,D]$.

The space discretization is performed on a uniform mesh with $n = N_x\cdot N_y$ equidistant points on $[0,L]\times[0,D]$, with $\Delta x=L/(N_x-1),~\Delta y=D/(N_y-1)$. We also discretize the time interval $[0,t_{\rm f}]$ using $N_t$ equally distributed points and $\Delta t=t_{\rm f}/(N_t-1)$. The discrete solution vector is:
\[
{\boldsymbol w}(t_N)\approx [w(x_i,y_j,t_N)]_{i=1,2,\ldots,N_x,~j=1,2,\ldots,N_y} \in \mathbb{R}^{n}, \quad N=1,2,\ldots,N_t.
\]

After spatial discretization of  \eqref{eqn:swe-pde} the semi-discrete SWE equations are:
\begin{equation}
\label{discrete_SWE}
\begin{split}
 {\bf u}' & =  -F_{11}({\bf u},\bm{{\phi}})-F_{12}({\bf u},{\bf v}) + {\bf F}\odot {\bf v}, \\
  {\bf v}' & =  -F_{21}({\bf u},{\bf v})-F_{22}({\bf v},\bm{{\phi}}) - {\bf F}\odot {\bf u}, \\
  {\bm{{\phi}}}' & =  -F_{31}({\bf u},\bm{{\phi}})-F_{32}({\bf v},\bm{{\phi}}),
  \end{split}
\end{equation}
where $\odot$ is the component-wise multiplication operator, ${\bf u}'$, ${\bf v}'$, ${\bm{{\phi}}}'$ denote semi-discrete time derivatives, and ${\bf F} ={\bf f} \cdot \mathbf{1}_{N_x}^T$ stores Coriolis components ${\bf f} = [f(y_j)]_{j=1,2,..,N_y}$. The non-linear terms involving derivatives in $x$ and $y$ directions, respectively, are defined as follows:
\begin{eqnarray*}
&& F_{i1},F_{i2}: \mathbb{R}^{n} \times \mathbb{R}^{n} \rightarrow \mathbb{R}^{n},~i=1,2,3, \\
&&  F_{11}({\bf u},{\boldsymbol \phi})={\boldsymbol u}\odot A_x{\boldsymbol u}+\frac{1}{2}{\boldsymbol \phi}\odot A_x{\boldsymbol\phi},
\quad  F_{12}({\boldsymbol u},{\boldsymbol v})={\boldsymbol v}\odot A_y{\boldsymbol u}, \\
  &&F_{21}({\boldsymbol u},{ \boldsymbol v})={\boldsymbol u}\odot A_x{\boldsymbol v}, \quad F_{22}({\boldsymbol v},{\boldsymbol\phi})={\boldsymbol v}\odot A_y{\boldsymbol v}+\frac{1}{2}{\boldsymbol \phi}\odot A_y{\boldsymbol\phi}, \\
  &&F_{31}({\boldsymbol u},{\boldsymbol \phi})=\frac{1}{2}{\boldsymbol\phi} \odot A_x {\boldsymbol u}+{\boldsymbol u} \odot {A_x\boldsymbol \phi}, \\
  && F_{32}({\boldsymbol v},{\boldsymbol \phi})=\frac{1}{2}{\boldsymbol\phi}\odot A_y{\boldsymbol v}+{\boldsymbol v} \odot A_y{\boldsymbol\phi}.
\end{eqnarray*}

Here $A_x,A_y\in \mathbb{R}^{n\times n}$ are constant coefficient matrices for discrete first-order and second-order  differential operators which take into account the boundary conditions.

The numerical scheme is implemented in Fortran and uses a sparse matrix environment. For operations with sparse matrices we employ the SPARSEKIT library \citep{Saad1994}, and the sparse linear systems obtained during the quasi-Newton iterations are solved using MGMRES library \citep{Barrett94,Kelley95,Saad2003}.  Here we do not decouple the model equations like in  \citet{Stefanescu2013} where the Jacobian is either block cyclic tridiagonal or block tridiagonal.

\subsection{Numerical experiments with the shallow water model}\label{sec:numerical_SWE}

We computed the initial conditions from the initial height condition No. 1 of \citet{Gram1969} i.e.
\begin{equation*}
h(x,y,0)=H_0+H_1+\tanh\biggl(9\frac{D/2-y}{2D}\biggr)+H_2\textrm{sech}^2\biggl(9\frac{D/2-y}{2D}\biggr)\sin\biggl(\frac{2\pi x }{L}\biggr).
\end{equation*}
The initial velocity fields are derived from the initial height field using the geostrophic relationship

\[
u = \biggl(\frac{-g}{f}\biggr)\frac{\partial h}{\partial y}, \quad v = \biggl(\frac{g}{f}\biggr)\frac{\partial h}{\partial x}.
\]

We use the following constants $ L=6000km,~D=4400km,~\hat f=10^{-4}s^{-1}$, $\beta=1.5\cdot10^{-11}s^{-1}m^{-1},~g=10 m s^{-2},~H_0=2000m,~H_1=220m,~H_2=133m.$

The domain is discretized using a mesh of  $31\times17 = 527$ points, with $\Delta x = 200$km and $\Delta y = 275$km. We select the integration time window to be $24$h and we use
$181$ time steps corresponding to $\Delta t = 480$s.

The considered quantity of interest depends on some particular components of the geopotential $\phi$ at the final
time step and it is defined below
\begin{equation}\label{eqn:SWE_qoi}
 \mathcal{Q}(\phi) = \sum_{i=1}^{6} \sum_{j=2}^{8}\phi(x_i,y_j,t_{N_t}),~[x_1,x_{6}] \times [y_2,y_8] = [0,1000]\textrm{km } \times [275,1925]\textrm{km}.
\end{equation}

To generate the reduced order forward and adjoint SWE models, we collect $180$ snapshots from each high-fidelity model representing the state solutions $u$, $v$ and $\phi$ and their corresponding adjoint variables at every time step. For each state variable, a snapshot matrix with $360$ columns containing both forward and adjoint solutions is formed. Three POD bases are then derived following singular value decompositions. Moreover, snapshots of the six forward non-linear terms were also collected and $6$ POD bases required by the DEIM approximation were constructed. Finally, we apply a Galerkin projection to the discrete SWE model and the forward POD/DEIM reduced order model is derived. More details regarding the construction of the SWE reduced order model can be found in \cite{stefanescu2014comparison}. Applying chain derivatives, the reduced adjoint model is obtained following the ARRA method \cite{cstefuanescu2015pod}.

Next we asses the a-posteriori error estimation result \eqref{eqn::aposteriori1} using numerical experiments. The quantity of interest \eqref{eqn:SWE_qoi} is computed using the forward high-fidelity and reduced order models and the true error is obtained. Using the projected solution of the reduced of adjoint model the estimated error of the quantity of interest is computed. Comparative results are shown in Figure \ref{fig::apost_same_config_SWE}. In panel (a), we set the number of DEIM points and test various POD bases dimensions. The estimated errors are very accurate even for smaller bases dimensions. Panel (b) describes a different experiment where the dimensions of POD bases are fixed to $30$ and the number of DEIM points is varied. Again, the estimated errors shows good agreement with the true.

\begin{figure}[t!]
  \centering
  \subfigure[Number of DEIM points $=35$] {\includegraphics[scale=0.3]{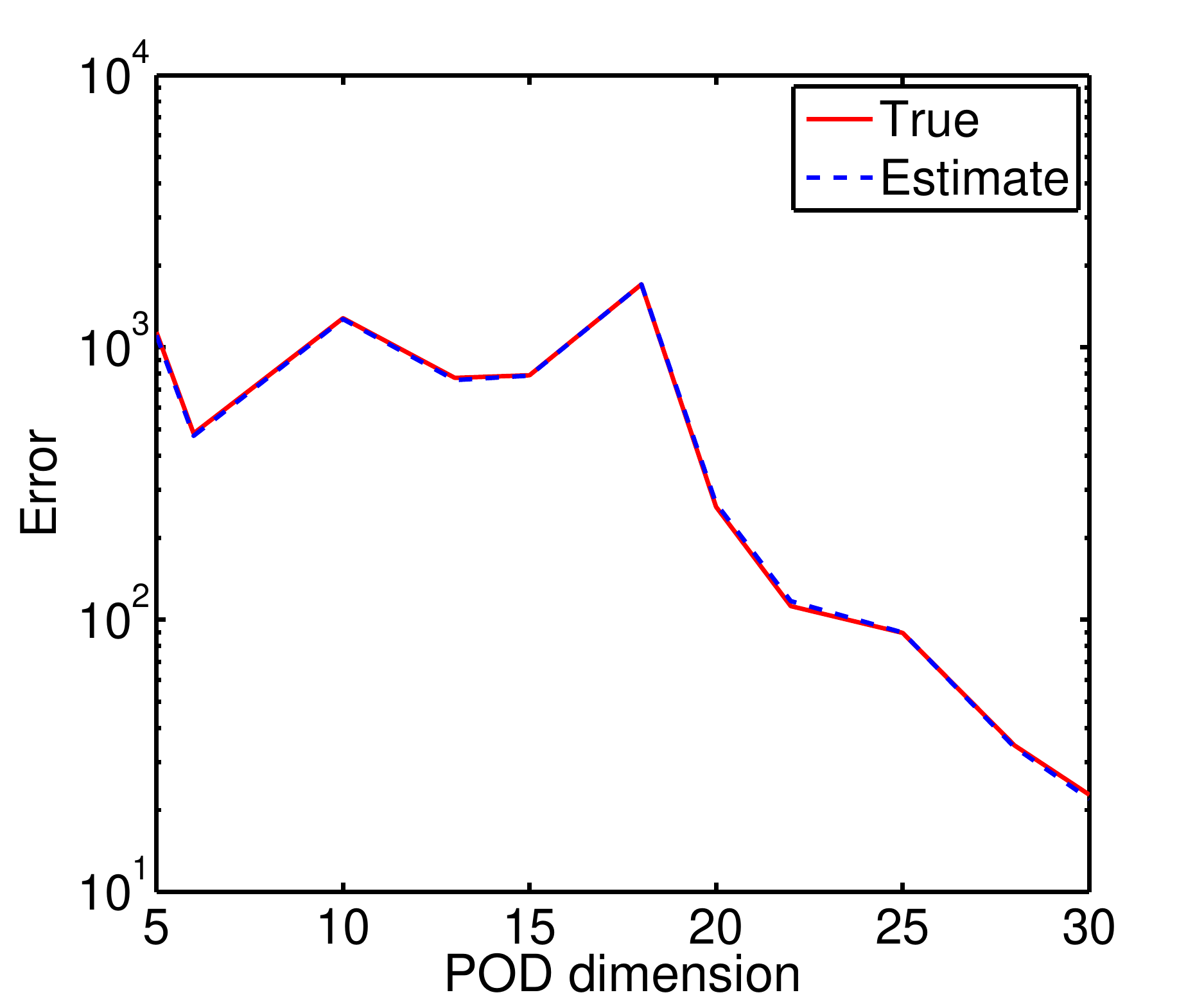}}
  \subfigure[Dimension of POD basis $=30$ ]{\includegraphics[scale=0.3]{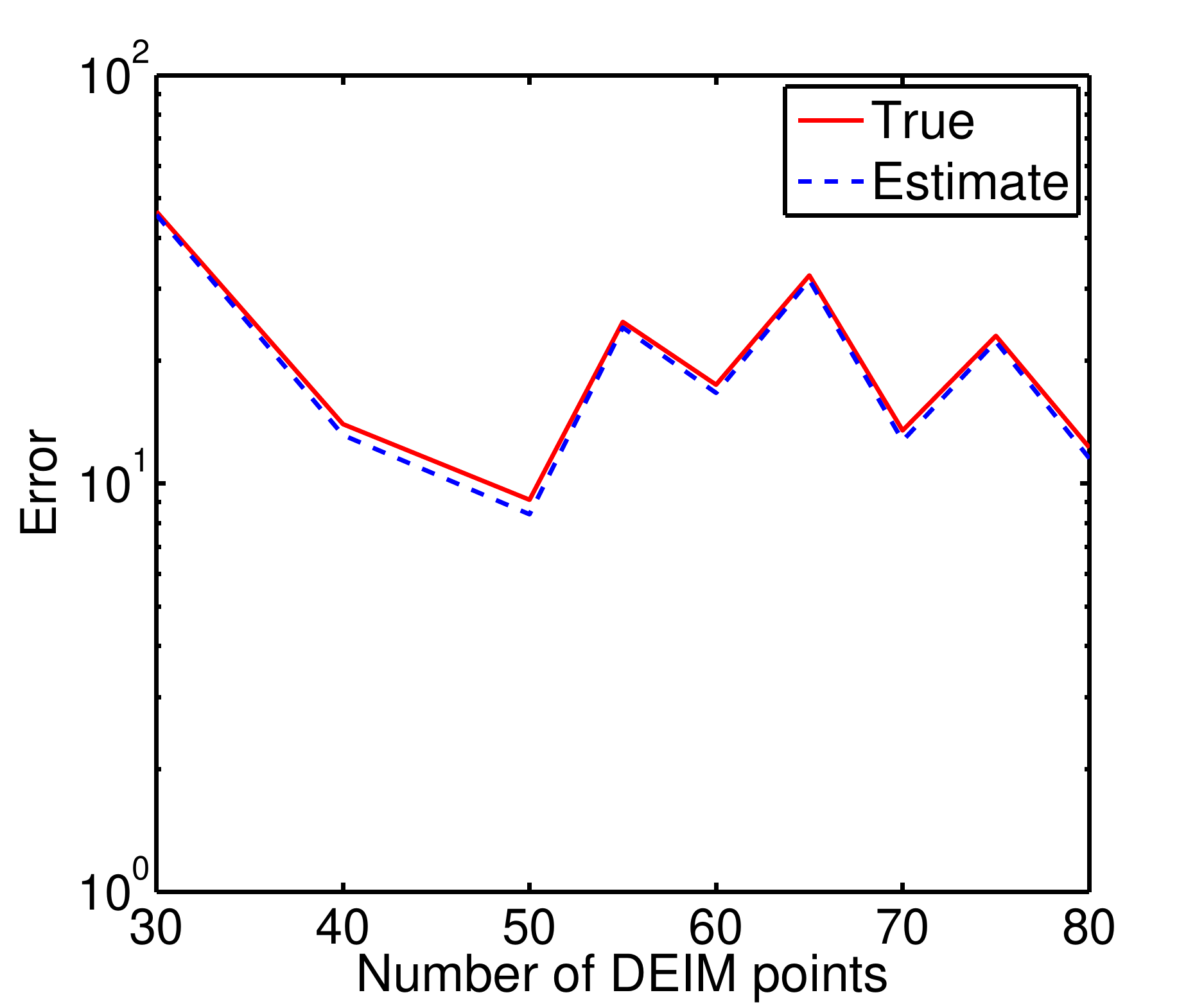}}
\caption{\label{fig::apost_same_config_SWE}A-posteriori error estimates for the same parametric configuration.}
\end{figure}

In the sequel, we make use of the dual weighted residuals employed by the a-posteriori error estimation result \eqref{eqn::aposteriori1} to generate adaptive DEIM points and decrease the quantity of interest error computed using the reduced order model. The dual weighted residuals are first computed replacing the dot product with the component-wise multiplication in the right-hand side part of the expression \eqref{eqn::aposteriori1}; i.e., $ {\widehat{\mathbf \lambda}}_i^T \cdot \Delta \x_i,~i=0,\ldots,N_t$. These residuals are then used together with the non-linear bases to generate the adaptive DEIM points. For example, the continuity equation dual weighted residuals are employed together with the non-linear basis of the non-linear term $F_{31}$ and the resulted adaptive DEIM points are depicted in Figure \ref{fig::DEIM_points_SWE}. We notice that more points are placed in the spatial domain of the quantity of interest delimited by the red rectangle in comparison with the standard DEIM points. Parameter $\alpha$ required by Algorithm \ref{alg::DEIM_adaptiv} was set to $\frac{1}{2}$.

\begin{figure}[t!]
\centering
\includegraphics[scale=0.37]{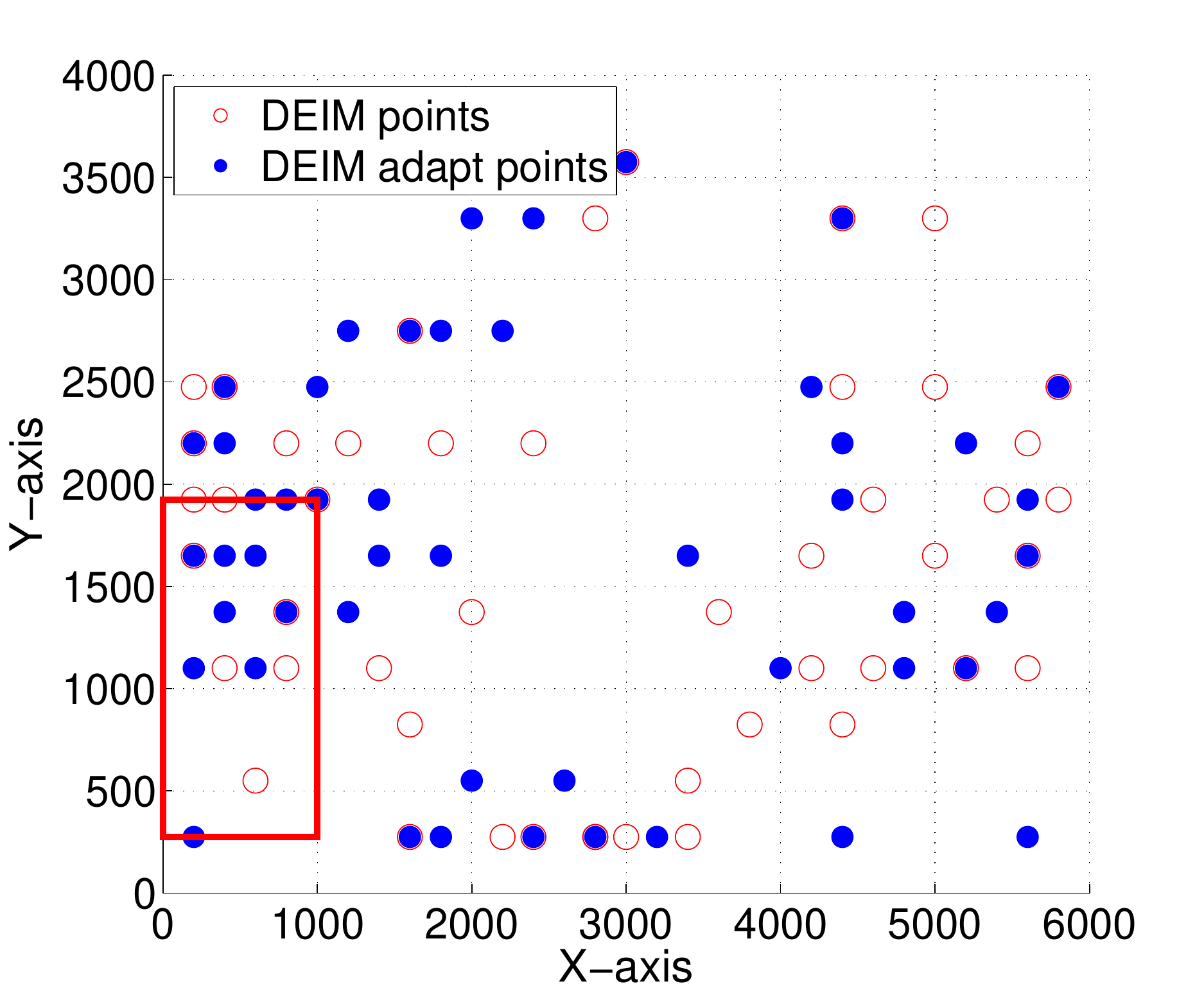}
\caption{\label{fig::DEIM_points_SWE} Adaptive vs traditional DEIM points.}
\end{figure}

The adaptive DEIM algorithm changes the DEIM approximation of the non-linear terms since matrix $P$ \eqref{eqn::POD_DEIM_nonlinearity} is modified. This leads to modified accuracy of the non-linear terms approximations over all the spatial points. The errors associated with various DEIM approximations of the non-linear terms $F_{31}$ at time step $t_2$ are shown in Figure \ref{fig::error_global_nonlinear_cond_number_SWE} (a). The adaptive DEIM approximation is less accurate than the standard DEIM over the entire spatial domain. This is explained by larger condition numbers of $P^TV_{\mu}$ \eqref{eqn::POD_DEIM_nonlinearity}  generated by the adaptive DEIM algorithm as noticed in Figure \ref{fig::error_global_nonlinear_cond_number_SWE} (b).


\begin{figure}[t!]
  \centering
  \subfigure[Non-linear approximation] {\includegraphics[scale=0.3]{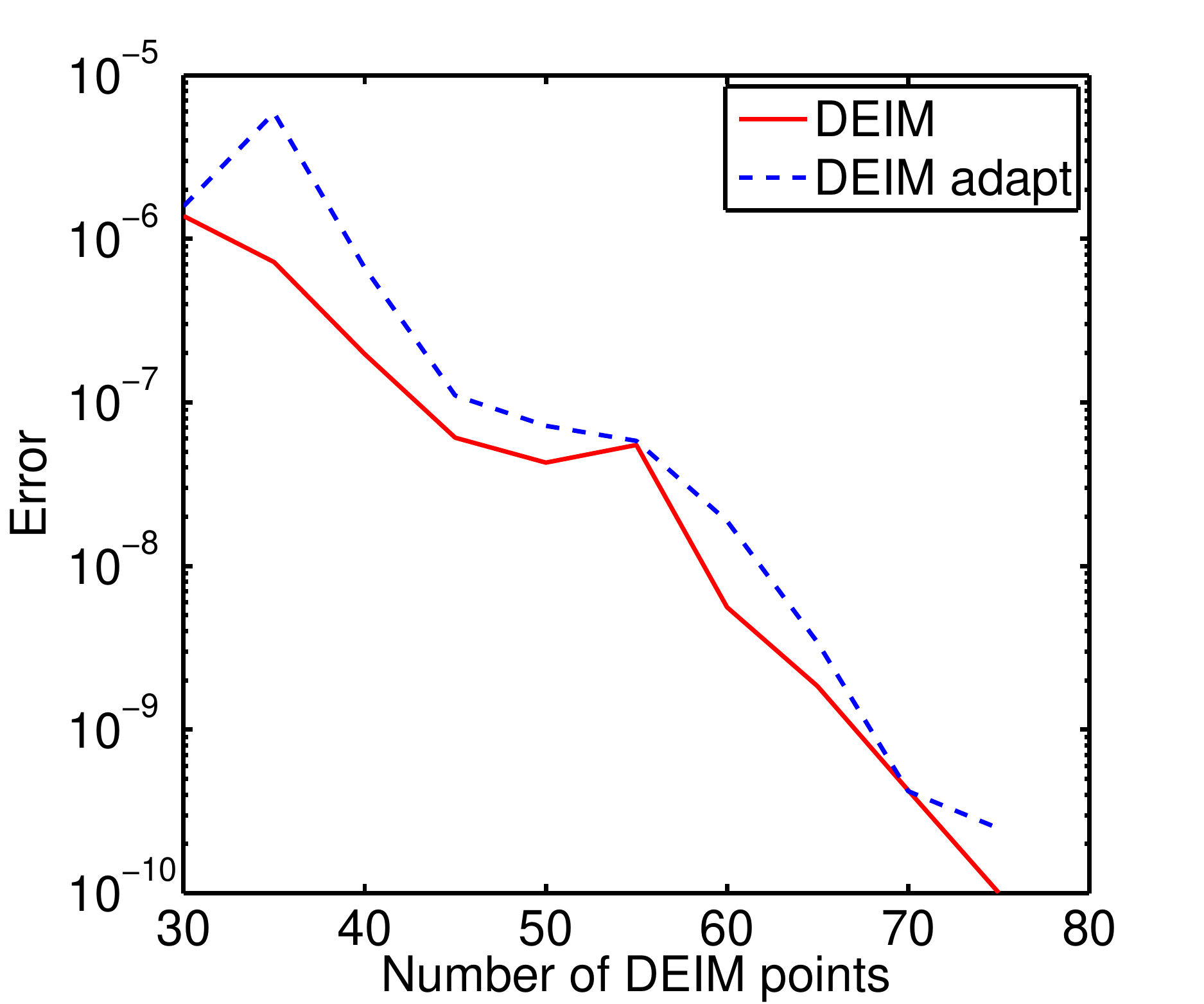}}
  \subfigure[Condition numbers]{\includegraphics[scale=0.3]{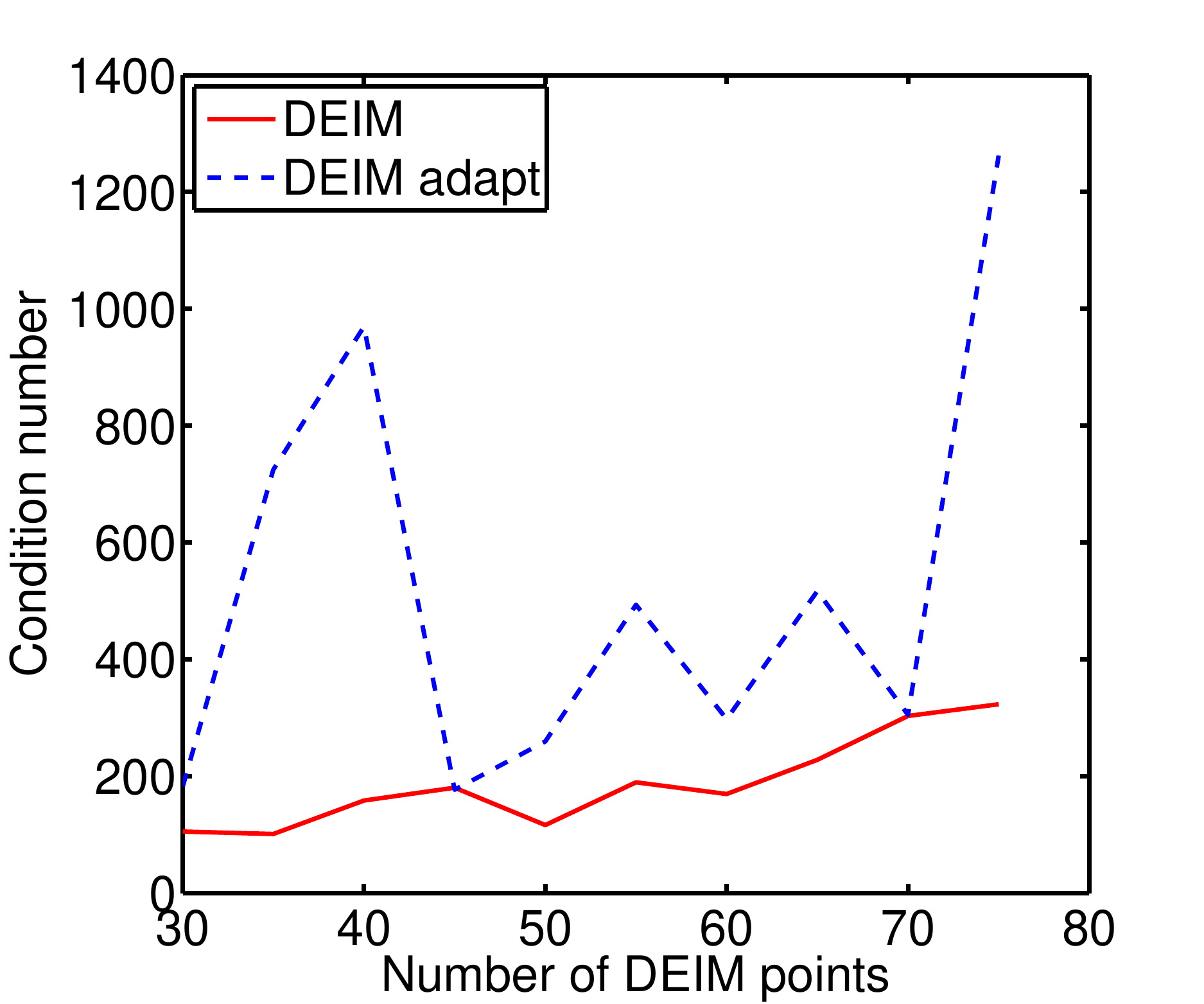}}
\caption{\label{fig::error_global_nonlinear_cond_number_SWE} Comparison between traditional and adaptive DEIM strategies - Global non-linear term error at time step $t_2$ in the Euclidian norm (left panel); Condition number of matrix $P^TV$ for non-linear term $F_{31}$. }
\end{figure}

\begin{figure}[t!]
  \centering
  \subfigure[Dimension of POD basis $=10$] {\includegraphics[scale=0.3]{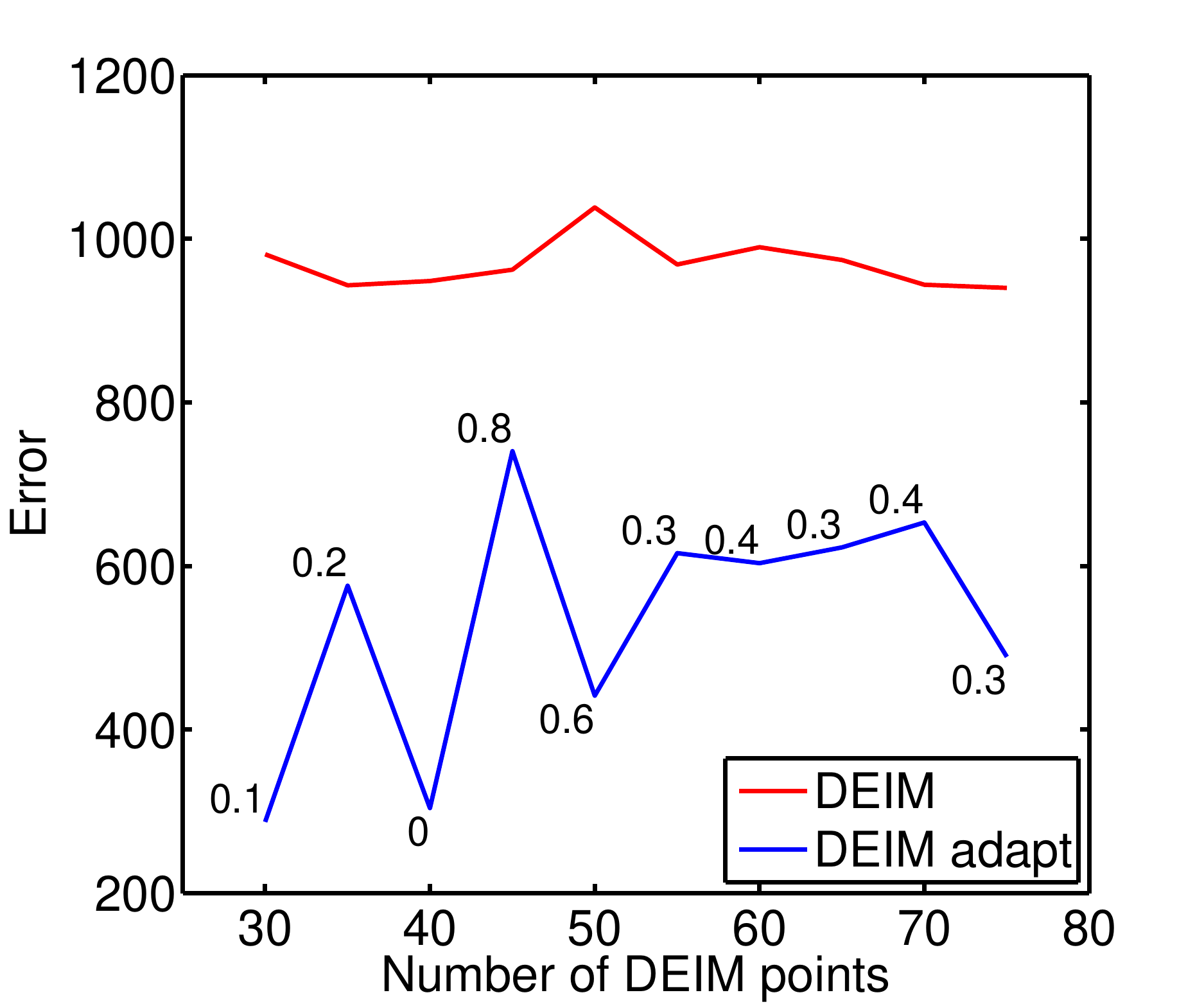}}
  \subfigure[Dimension of POD basis $=20$]{\includegraphics[scale=0.3]{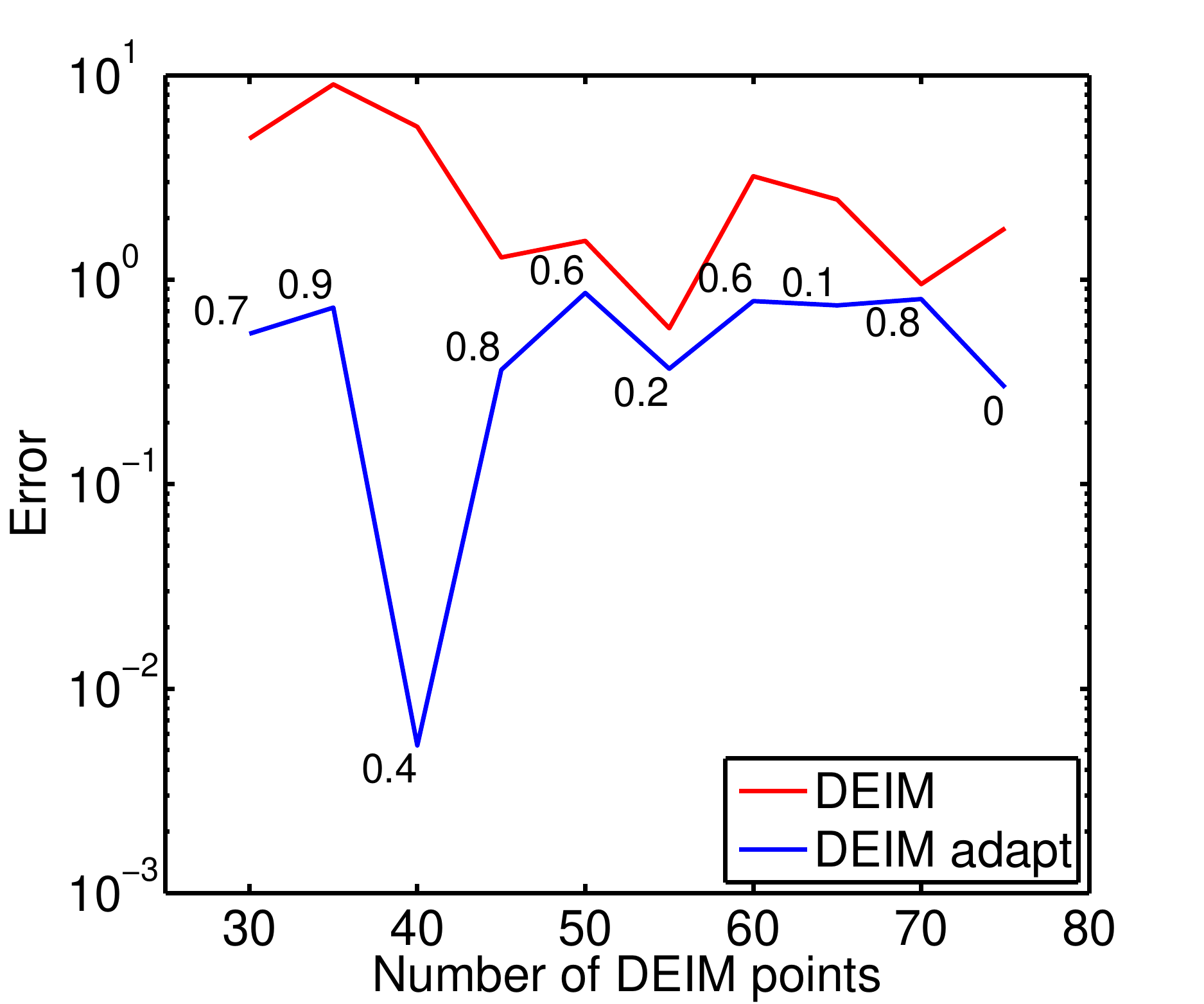}}
\caption{\label{fig::adaptive_DEIM_efficiency_SWE} Adaptive vs traditional DEIM errors approximation errors of the quantity of interest.  }
\end{figure}

Next we compare the quantity of interest errors obtained using POD/DEIM and POD/adaptive DEIM reduced order models. Among $11$ values of $$\alpha=\{0,0.1,0.2,\ldots,0.9,1\},$$ we show only the resulted most accurate estimates in Figure \ref{fig::adaptive_DEIM_efficiency_SWE} for two POD bases dimensions. The labels along the adaptive DEIM trajectories describe the values of $\alpha$ that generated the most accurate quantities of interest. In general, it is more difficult to enhance the quantity of interest accuracy once the reduced order state solution is very precise. This is obvious by comparing the results presented in Figure \ref{fig::adaptive_DEIM_efficiency_SWE}. Larger POD basis dimensions usually leads to more accurate ROM solutions.

\section{Conclusions}\label{sec:conclusions}

In this paper we proposed a-posteriori error estimation results to estimate the errors in quantities of interests calculated using POD/DEIM reduced order models. Later, using the dual weighted residuals, we introduced an adaptive DEIM algorithm to improve the accuracy of these quantities of interests.

The first a-posteriori error estimation result sums up the first-order expansions of the quantity of interest's components computed as the product between the gradient of each component and high-fidelity model residuals. The gradient of each component is obtained as the solution of a high-fidelity adjoint model. Efficient versions of this a-posteriori error estimation result are obtained for the explicit and implicit Euler schemes by assuming a good level of accuracy for the surrogate models solutions. Only one reduced forward and adjoint model runs and the evaluation of the high-fidelity model residuals are required for estimating the quantity of interest errors for both schemes. Reduced order models generated using a single parametric configuration were employed to estimate the quantity of interest for a different parametric configuration in the case of 1D-Burgers model. In the case of the SWE model, we tested only the same parametric configuration case. Several POD basis dimensions and numbers of DEIM points were tested and the estimated error accuracy increased with more accurate reduced order model solutions.

Next we designed an adaptive DEIM algorithm to increase the precision of the quantity of interest computed using the forward reduced order model. The singular vectors of the dual weighted residuals together with the non-linear term basis are used to generate the new DEIM points. This changes the DEIM approximation of the non-linear terms and subsequently the associated reduced order model solutions. Numerical results using the new reduced order models revealed that the accuracy of the quantities of interest was enhanced for both 1D-Burgers and SWE models.

In the future we plan to apply regression machine learning models to predict the coefficient $\alpha$ used to combine the dual weighted residuals singular vectors and non-linear basis within the adaptive DEIM algorithm. In combination with MP-LROM models \cite{moosavi2017multivariate}, the a-posteriori error estimation result and the adaptive DEIM algorithm will provide accurate outcomes along the entire parametric space.

\paragraph{\bf Acknowledgment}
The work of Dr. R\u azvan Stefanescu and Prof. Adrian Sandu was supported by the NSF CCF--1218454, AFOSR FA9550--12--1--0293--DEF, AFOSR 12-2640-06, and by the Computational Science Laboratory at Virginia Tech. R\u azvan \c Stef\u anescu would like to thank Vishwas Rao for useful conversations about error estimation results.



\bibliographystyle{abbrvnat}
\bibliography{Bib/Software,Bib/ROM_state_of_the_art,Bib/POD_bib,Bib/CDS_E_proposal,Bib/sandu,Bib/comprehensive_bibliography1,Bib/Razvan_bib,Bib/Razvan_bib_ROM_IP,Bib/NSF_KB,Bib/data_assim_weak-fdvar,Bib/reduced_models}

\end{document}